\begin{document}

\newcommand{\wk}{\mbox{$\,<$\hspace{-5pt}\footnotesize )$\,$}}

\numberwithin{equation}{section}
\newtheorem{teo}{Theorem}
\newtheorem{lemma}{Lemma}

\newtheorem{coro}{Corollary}
\newtheorem{prop}{Proposition}
\theoremstyle{definition}
\newtheorem{definition}{Definition}
\theoremstyle{remark}
\newtheorem{remark}{Remark}

\newtheorem{scho}{Scholium}
\newtheorem{open}{Question}
\newtheorem{example}{Example}
\numberwithin{example}{section}
\numberwithin{lemma}{section}
\numberwithin{prop}{section}
\numberwithin{teo}{section}
\numberwithin{definition}{section}
\numberwithin{coro}{section}
\numberwithin{figure}{section}
\numberwithin{remark}{section}
\numberwithin{scho}{section}

\bibliographystyle{abbrv}

\title{Duality of gauges and symplectic forms in vector spaces}
\date{}

\author{Vitor Balestro\footnote{Corresponding author}  \\ Instituto de Matem\'{a}tica e Estat\'{i}stica \\ Universidade Federal Fluminense \\ 24210201 Niter\'{o}i \\ Brazil \\ vitorbalestro@id.uff.br \and Horst Martini \\ Fakult\"{a}t f\"{u}r Mathematik \\ Technische Universit\"{a}t Chemnitz \\ 09107 Chemnitz\\ Germany \\ martini@mathematik.tu-chemnitz.de \and Ralph Teixeira \\ Instituto de Matem\'{a}tica e Estat\'{i}stica \\ Universidade Federal Fluminense \\ 24210201 Niter\'{o}i \\ Brazil \\ ralph@mat.uff.br}

\maketitle

\begin{abstract} A \emph{gauge} $\gamma$ in a vector space $X$ is a distance function given by the Minkowski functional associated to a convex body $K$ containing the origin in its interior. Thus, the outcoming concept of \emph{gauge spaces} $(X, \gamma)$ extends that of finite dimensional real Banach spaces by simply neglecting the symmetry axiom (a viewpoint that Minkowski already had in mind). If the dimension of $X$ is even, then the fixation of a symplectic form yields an identification between $X$ and its dual space $X^*$. The image of the polar body $K^{\circ}\subseteq X^*$ under this identification yields a (skew-)dual gauge on $X$. In this paper, we study geometric properties of this so-called \emph{dual gauge}, such as its behavior under isometries and its relation to orthogonality. A version of the Mazur-Ulam theorem for gauges is also proved. As an application of the theory, we show that closed characteristics of the boundary of a (smooth) convex body are optimal cases of a certain isoperimetric inequality. 
\end{abstract}

\noindent\textbf{Keywords}: asymmetric norm, closed characteristic, convex distance function, dual gauge, gauge space, generalized Banach space, isometry, Mazur-Ulam theorem, polar body, symplectic form.

\bigskip

\noindent\textbf{MSC 2010:} 26B25, 37J05, 46B04, 46B20, 46B99, 52A20, 52A21, 52A40, 53D05, 53D99.

\section{Introduction}

\emph{Gauges} are, roughly speaking, norms without symmetry axiom (sometimes also called \emph{asymmetric norms}). Vector spaces endowed with gauges have been extensively studied in the last few years (see, for example, the recent papers \cite{Bra-Mer}, \cite{Bra-Mer-Jahn-Mar}, \cite{Jahn1}, \cite{jahn}, \cite{Jahn2}, \cite{Mer-Jahn-Rich} and \cite{obst}). The aim of this work is to investigate duality for these spaces. We start with some basic notions. Let $X$ be a finite-dimensional vector space with origin $o$ (sometimes also denoted by $0_X$), and let $K \subseteq X$ be a \emph{convex body} (that is, a compact, convex set with non-empty interior) such that $o \in \mathrm{int}K$. The \emph{gauge} associated to $K$ is the functional $\gamma_K:X\rightarrow \mathbb{R}$ defined as
\begin{align*} \gamma_K(x) = \inf\{\lambda \geq 0:x\in \lambda K\},
\end{align*}
for $x \in X$ (see Figure \ref{gauge}). Of course, a gauge has the following properties:\\

\noindent\textbf{i.\,(positivity and nondegeneracy)} $\gamma_K(x) \geq 0$ for all $x \in X$, with equality if and only if $x = 0$, \\

\noindent\textbf{ii.\,(positive homogeneity)} $\gamma_K(\alpha x) = \alpha\gamma_K(x)$ for any $\alpha \geq 0$ and $x \in X$, \\

\noindent\textbf{iii.\,(triangle inequality)} $\gamma_K(x+y) \leq \gamma_K(x) + \gamma_K(y)$ for any $x,y \in X$. \\

\begin{figure}[h]
\centering
\includegraphics{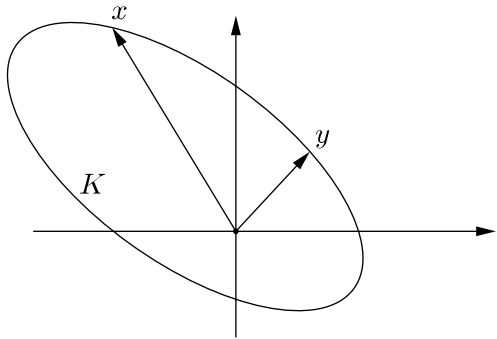}
\caption{$\gamma_K(x) = \gamma_K(y)= 1$.}
\label{gauge}
\end{figure}

Conversely, if $\gamma:X\rightarrow \mathbb{R}$ is a function satisfying \textbf{i}, \textbf{ii}, and \textbf{iii}, then the \emph{unit ball} $B_{\gamma} = \{x \in X : \gamma(x) \leq 1\}$ is a convex body with the origin in its interior and
\begin{align*} \gamma = \gamma_{B_{\gamma}},
\end{align*}  
from which $\gamma$ is a gauge. This shows that a gauge can be equivalently defined as the Minkowski functional of a convex body containing the origin in its interior (as it was originally done), or as a function satisfying \textbf{i}, \textbf{ii}, and \textbf{iii}. A vector space $(X,\gamma)$ endowed with a gauge will be called a \emph{gauge space}. 

The boundary $\partial B_{\gamma} = \{x \in X:\gamma(x) = 1\}$ of the unit ball of a gauge space $(X,\gamma)$ is called the \emph{unit sphere}. A gauge $\gamma_K$ defines a \emph{convex distance function} $d_K:X\times X\rightarrow[0,+\infty)$ on $X$ by
\begin{align*} d_K(x,y) = \gamma_K(y-x),
\end{align*}
for $x,y \in X$. Notice that $d_K$ is \textbf{not} necessarily symmetric. If this is the case, then the convex body $K$ is centered at the origin and the gauge is a usual \emph{norm}. And if this is not the case,  
then the unit ball still can be centrally symmetric, but its center has to differ from the origin (see again Figure \ref{gauge}, where the unit ball is centrally symmetric, but not centered at the orgin). On the other hand, it is clear that $d_K$ is translation invariant. It is easy to see that the open metric balls
\begin{align*}  \{x \in X: d_K(p,x) < r\},
\end{align*}
for $p \in X$ and $r>0$, induce on $X$ the same topology as any inner product or any norm induce. This happens because any Euclidean ball contains a homothetic copy of $K$, and any open metric ball as above contains a homothetic copy of the Euclidean ball. We also define the \emph{distance} between two subsets $A,B\subseteq (X,\gamma)$ as
\begin{align*} d_X(A,B) := \inf\{d_X(a,b):a \in A \ \mathrm{and} \ b\in B\},
\end{align*}
where we notice very carefully that this is not a symmetric concept. From standard convexity arguments, we get that the distance from a point to a line in a gauge space has a similar geometric interpretation as in the symmetric (norm) case. In what follows, recall that we denote by $B_p(r)$ the (closed) ball $p + rK$ with center $p \in X$ and radius $r > 0$, where $K$ is the unit ball of $\gamma$. 
\begin{prop} Let $\ell \subseteq (X,\gamma)$ be a line, and let $p \in X$ be a point such that $p \notin \ell$. Then
\begin{align*} d_X(p,\ell) = \inf\{r>0: B_p(r)\cap \ell \neq \emptyset\}.
\end{align*}
Moreover, if $r = d(p,\ell)$ and $q \in B_p(r)\cap\ell$, then $\ell$ supports $B_p(r)$ at $q$. 
\end{prop}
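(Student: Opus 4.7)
The plan is to unwind both sides of the claimed equality directly from the definitions of gauge, distance, and metric ball, and then handle the supporting line assertion by a one-line contradiction argument. The statement is almost tautological once the definitions are unpacked, so there is no serious obstacle beyond bookkeeping.

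First I would observe that, by the very definitions of $d_X$ and of $B_p(r)=p+rK$,
\[
d_X(p,\ell) \;=\; \inf\{\gamma(x-p) : x\in\ell\},
\]
and the condition $B_p(r)\cap\ell\neq\emptyset$ is equivalent to the existence of $x\in\ell$ with $\gamma(x-p)\leq r$. Abbreviating $R := \inf\{r>0 : B_p(r)\cap\ell\neq\emptyset\}$, the two inequalities $R\leq d_X(p,\ell)$ and $d_X(p,\ell)\leq R$ then follow by short symmetric arguments. For the first, given $x\in\ell$ and $\varepsilon>0$ we have $x\in B_p(\gamma(x-p)+\varepsilon)$, so $R\leq\gamma(x-p)+\varepsilon$; letting $\varepsilon\downarrow 0$ and taking the infimum over $x\in\ell$ yields $R\leq d_X(p,\ell)$. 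For the reverse, any point in $B_p(r)\cap\ell$ witnesses $d_X(p,\ell)\leq r$, and taking the infimum over admissible $r$ gives $d_X(p,\ell)\leq R$.

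For the supporting line claim, suppose $r = d_X(p,\ell)$ and $q\in B_p(r)\cap\ell$. If $\ell$ did not support $B_p(r)$ at $q$, then $\ell$ would meet $\mathrm{int}\,B_p(r)$, producing a point $y\in\ell$ with $\gamma(y-p)<r$ and hence $d_X(p,\ell)<r$, a contradiction. Therefore $\ell\cap\mathrm{int}\,B_p(r)=\emptyset$, and since $q\in\ell\cap B_p(r)\subseteq\ell\cap\partial B_p(r)$, standard convex separation produces a supporting hyperplane of $B_p(r)$ at $q$ containing $\ell$, which is exactly what it means for $\ell$ to support $B_p(r)$ at $q$.

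The only mildly delicate point is the constraint $r>0$ in the infimum defining $R$; but since $p\notin\ell$, the non-degeneracy of $\gamma$ together with compactness of $K$ force $d_X(p,\ell)>0$, so this constraint is harmless. Consequently no step in the argument presents a genuine obstacle.
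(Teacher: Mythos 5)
Your proof is correct and takes essentially the same route as the paper's: unpack the definitions of $d_X$ and $B_p(r)$ to identify the two infima, and observe that a point of $\ell$ in $\mathrm{int}\,B_p(r)$ would force $d_X(p,\ell)<r$, so $\ell$ must support the ball. You simply spell out the two-inequality bookkeeping and the positivity of $d_X(p,\ell)$ that the paper leaves implicit.
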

\begin{proof} If $B_p(r)\cap\ell = \emptyset$, then it is clear that $d_X(p,\ell) > r$. On the other hand, if $B_p(r) \cap \ell \neq \emptyset$, but $\ell$ does not support $B_p(r)$, then there exists a point $q \in \mathrm{int}B_p(r)\cap \ell$, from where $d_X(p,\ell) < r$. It follows that the equality $d_X(p,\ell) = r$ holds precisely when $\ell$ supports the ball $B_p(r)$. Our claims follow immediately from this argument, and Figure \ref{distpointline} illustrates the situation.

\end{proof}
\begin{figure}[h]
\centering
\includegraphics{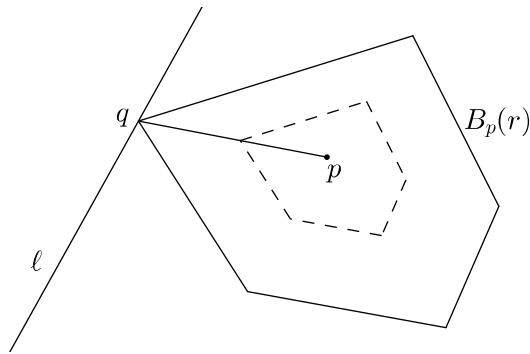}
\caption{The distance $d_X(p,\ell)$ is attained at $q \in \ell$.}
\label{distpointline}
\end{figure}

From now on, for simplicity of the notation, we will denote by $\mathcal{K}_{\mathrm{o}}(X)$ the space of convex bodies of $X$ which contain the origin as an interior point.

\section{The Mazur-Ulam theorem for gauges}

The Mazur-Ulam theorem states that any isometry between normed spaces which fixes the origin is linear (see \cite{thompson}). In this section, we extend this result to gauge spaces. Let $(X,\gamma_X)$ and $(Y,\gamma_Y)$ be gauge spaces. A map $T:(X,\gamma_X)\rightarrow (Y,\gamma_Y)$ is a \emph{gauge isometry} (or simply an \emph{isometry}) if
\begin{align*} d_X(x,z) = d_Y(Tx,Tz),
\end{align*}
for any $x,z \in X$, where $d_X$ and $d_Y$ are the distances induced by $\gamma_X$ and $\gamma_Y$, respectively. We also say that two gauges $\gamma_1$ and $\gamma_2$ in the same vector space $X$ are \emph{isometric} if there exists an isometry $T:(X,\gamma_1)\rightarrow(X,\gamma_2)$. From the non-degeneracy of gauges, we have that any gauge isometry is injective. If $T:(X,\gamma_X)\rightarrow (Y,\gamma_Y)$ is a surjective isometry, then we say that $(X,\gamma_X)$ and $(Y,\gamma_Y)$ are \emph{isometric gauge spaces}. 

Notice that an isometry is not necessarily gauge-preserving (although the converse is true). In particular, any translation is an isometry. 

\begin{teo}[Mazur-Ulam theorem for gauges] Let $(X,\gamma_X)$ and $(Y,\gamma_Y)$ be gauge spaces. If $T:(X,\gamma_X)\rightarrow(Y,\gamma_Y)$ is an isometry such that $T(o_X) = o_Y$, then $T$ is linear.
\end{teo}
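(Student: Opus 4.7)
The plan is to reduce the statement to the classical (symmetric) Mazur--Ulam theorem, which is available in \cite{thompson}. The key observation is that a gauge isometry, although the underlying distance is asymmetric, automatically respects the natural symmetrization of the gauge.

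First, for each gauge I would introduce the function
\begin{align*}
\|v\|_X := \gamma_X(v) + \gamma_X(-v),\qquad v\in X,
\end{align*}
and analogously $\|\cdot\|_Y$ on $Y$. A direct check shows that $\|\cdot\|_X$ is a genuine (symmetric) norm on $X$: positivity and nondegeneracy come from the corresponding axioms of $\gamma_X$; the triangle inequality follows by adding the triangle inequality for $\gamma_X$ evaluated at $u+v$ and at $-u-v$; and homogeneity must be verified in two cases: for $\alpha\geq 0$ it is immediate, while for $\alpha<0$ one writes $\alpha v=(-\alpha)(-v)$ and uses positive homogeneity to conclude $\|\alpha v\|_X=|\alpha|\,\|v\|_X$. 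Symmetry is built into the definition.

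Next, I would verify that $T$ remains an isometry for these symmetrized norms. The computation
\begin{align*}
\|Tx-Tz\|_Y &= \gamma_Y(Tx-Tz)+\gamma_Y(Tz-Tx) = d_Y(Tz,Tx)+d_Y(Tx,Tz)\\
&= d_X(z,x)+d_X(x,z) = \gamma_X(x-z)+\gamma_X(z-x) = \|x-z\|_X
\end{align*}
is the only thing needed: one simply applies the isometry condition to the \emph{two} ordered pairs $(x,z)$ and $(z,x)$, which is legitimate because the definition of gauge isometry requires the equality $d_X(a,b)=d_Y(Ta,Tb)$ for \emph{all} ordered pairs.

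With the symmetrization in place, the classical Mazur--Ulam theorem applied to $T\colon(X,\|\cdot\|_X)\to(Y,\|\cdot\|_Y)$ yields that $T$ is affine, and the hypothesis $T(o_X)=o_Y$ then forces $T$ to be linear. The step I expect to be the only delicate one is the verification that $\|\cdot\|_X$ satisfies homogeneity for negative scalars, because this is the only place where the asymmetry of $\gamma_X$ really intervenes; everything else is a transparent transfer of structure. A secondary subtlety is the applicability of classical Mazur--Ulam without an explicit surjectivity assumption, but since the paper works in finite dimensions an injective isometry between equidimensional normed spaces is automatically surjective, so no extra hypothesis is needed. If one wished to avoid invoking the symmetric Mazur--Ulam theorem, an alternative would be to carry out V\"ais\"al\"a's midpoint argument directly in $(X,\gamma_X)$, showing that the midset $\{z:d_X(x,z)=d_X(z,y)=\tfrac12 d_X(x,y)\}$ is preserved by $T$ and isolates the midpoint by a contraction-type construction; this is more laborious but avoids any external citation.
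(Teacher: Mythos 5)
Your proposal is correct and follows essentially the same route as the paper: symmetrize the gauge via $\|v\|=\gamma(v)+\gamma(-v)$, observe that $T$ is an isometry for the resulting norms by applying the gauge-isometry condition to both ordered pairs, and invoke the classical Mazur--Ulam theorem. The extra care you take with negative homogeneity and with surjectivity is sound but not a departure from the paper's argument.
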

\begin{proof} The idea is to construct norms on $X$ and $Y$ for which $T$ is also an isometry. Define
\begin{align*} ||x||_X = \gamma_X(x) + \gamma_X(-x)
\end{align*}
for each $x \in X$, and define $||\cdot||_Y$ analogously for vectors in $Y$. It is straightforward that $||\cdot||_X$ and $||\cdot||_Y$ are norms on $X$ and $Y$, respectively. Now let $T$ be an isometry between the gauge spaces $(X,\gamma_X)$ and $(Y,\gamma_Y)$. We have
\begin{align*} ||z - x||_X = \gamma_X(z-x) + \gamma_X(-z+x) = d_X(x,z) + d_X(z,x) = \\
d_Y(Tx,Tz) + d_Y(Tz,Tx) = \gamma_Y(Tz-Tx) + \gamma_Y(-Tz+Tx) =  ||Tz - Tx||_Y,
\end{align*}
for any $x,z \in X$. Therefore, $T$ is also an isometry between the normed spaces $(X,||\cdot||_X)$ and $(Y,||\cdot||_Y)$. Under the condition that $T(o_X) = o_Y$, the classical Mazur-Ulam theorem implies that $T$ is linear. This concludes the proof. 

\end{proof}

For more on the Mazur-Ulam theorem, including some generalizations, we refer the reader to the papers \cite{rassias} and \cite{wang}. It is clear that, among the maps which fix the origin, the isometries are precisely the gauge-preserving transformations.

Next we investigate what happens when the isometry does not fix the origin. We will show that, in this case, the isometry is linear up to composition with a translation. Recall that an \emph{affine map} (or \emph{affine transformation}) between vector spaces is a map which can be written as the composition of a translation with a linear map. We also say that two convex bodies $K_1\subseteq X$ and $K_2 \subseteq Y$ are \emph{affinely equivalent} if there exists an injective affine transformation $A:X\rightarrow Y$ such that $K_2 = A(K_1)$. The bodies are said to be \emph{linearly equivalent} if there exists a linear map $T:X\rightarrow Y$ such that $K_2 = T(K_1)$. 

\begin{coro} Any isometry between gauge spaces is an (injective) affine transformation. 
\end{coro}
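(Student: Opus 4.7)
The plan is to reduce the statement to the Mazur--Ulam theorem for gauges that has just been established, by composing the isometry with a translation so as to produce an origin-fixing isometry. Given an isometry $T:(X,\gamma_X)\to(Y,\gamma_Y)$, I would set $S(x):=T(x)-T(o_X)$, so that $S(o_X)=o_Y$ by construction, and then aim to show that $S$ is linear.

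The first step is to check that $S$ is again an isometry. Because the gauge-induced distance depends only on the difference of its arguments, translations of the target are automatically isometries, and therefore
\[
d_Y(S(x),S(z))=\gamma_Y(S(z)-S(x))=\gamma_Y(T(z)-T(x))=d_Y(T(x),T(z))=d_X(x,z)
\]
for all $x,z\in X$. Once this is in hand, the Mazur--Ulam theorem for gauges (the previous theorem) applies directly to $S$, yielding that $S$ is linear. Consequently $T(x)=S(x)+T(o_X)$ is the composition of a linear map with a translation, which is precisely the definition of an affine transformation.

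Injectivity comes for free: it was already noted, as a direct consequence of the non-degeneracy axiom \textbf{i}, that any gauge isometry is injective, and this applies in particular to $T$. Alternatively, once $T$ is known to be affine, injectivity of the linear part $S$ (itself an isometry) yields injectivity of $T$.

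No real obstacle is anticipated, since the argument is a short bootstrap from the theorem just proved. The only point demanding a moment of attention is that $d_Y$ is in general not symmetric, so one must verify explicitly that the translation trick is compatible with the asymmetric distance; this works out cleanly because $d_Y(a,b)=\gamma_Y(b-a)$ is translation invariant in both entries simultaneously, which is exactly what the displayed computation uses.
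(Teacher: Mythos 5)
Your proposal is correct and follows essentially the same route as the paper: compose $T$ with the translation by $-T(o_X)$ to obtain an origin-fixing isometry, invoke the Mazur--Ulam theorem for gauges to get linearity, and recover $T$ as a linear map plus a translation. The only difference is that you spell out the translation-invariance check explicitly, which the paper leaves as ``it is clear.''
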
 
\begin{proof} Let $T:(X,\gamma_X)\rightarrow(Y,\gamma_Y)$ be a gauge isometry such that $T(o_X) = y_0 \in Y$. Let $T_0:X\rightarrow Y$ be defined as 
\begin{align*} T_0(x) = T(x) - y_0,
\end{align*}
for any $x \in X$. It is clear that $T_0$ is an isometry which fixes the origin, and hence $T_0$ is linear. Writing $T = T_0 + y_0$ gives the result.

\end{proof}

\begin{remark}\label{lineariso} In a certain way, every isometry can be regarded as linear. If $T:(X,\gamma_X)\rightarrow(Y,\gamma_Y)$ is an affine isometry written as $T = T_0 + y_0$, then the linear map $T_0$ is an isometry between $(X,\gamma_X)$ and $(Y,\gamma_Y)$. Indeed, for any $x,z \in X$ we have
\begin{align*} d_X(x,z) = \gamma_X(z-x) = \gamma_Y(Tz-Tx) = \gamma_Y(T_0z - T_0x) = d_Y(T_0x,T_0z).
\end{align*}
In particular, if two gauge spaces are isometric, then they are also \emph{linearly isometric}, meaning that there exists a linear isometry between them.
\end{remark}

\begin{coro}\label{isoaff} Two gauges $\gamma_1$ and $\gamma_2$ on a vector space $X$ are isometric if and only if their unit balls $K_1$ and $K_2$ are linearly equivalent convex bodies. 
\end{coro}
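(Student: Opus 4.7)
The plan is to prove the two directions separately, relying on Remark \ref{lineariso} to restrict attention to linear isometries in the forward direction, and on a direct computation with the Minkowski functional in the backward direction.

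For the forward direction, suppose $\gamma_1$ and $\gamma_2$ are isometric, and let $T:(X,\gamma_1)\rightarrow (X,\gamma_2)$ be an isometry. By Remark \ref{lineariso}, we may replace $T$ with the linear isometry $T_0 = T - T(o)$, so we can assume $T$ itself is linear. I would then show that such a linear isometry is automatically gauge-preserving: since $T(o)=o$, setting $x = o$ in the isometry condition yields
\begin{align*}
\gamma_1(z) = d_X(o,z) = d_X(T(o), T(z)) = \gamma_2(T(z))
\end{align*}
for every $z\in X$. Consequently, $z \in K_1$ iff $\gamma_1(z)\le 1$ iff $\gamma_2(Tz)\le 1$ iff $Tz \in K_2$, so $T(K_1) = K_2$ and the unit balls are linearly equivalent.

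For the backward direction, assume there is a linear map $T:X\rightarrow X$ with $T(K_1) = K_2$. Since $K_2$ has nonempty interior and $K_1$ is compact, $T$ must be an invertible linear map. I would then check by unwinding the Minkowski-functional definition that
\begin{align*}
\gamma_2(Tx) = \inf\{\lambda\ge 0 : Tx \in \lambda K_2\} = \inf\{\lambda\ge 0 : Tx \in \lambda T(K_1)\} = \inf\{\lambda\ge 0 : x \in \lambda K_1\} = \gamma_1(x),
\end{align*}
where linearity and invertibility of $T$ justify the middle step. Translation invariance of the induced distances then immediately upgrades this gauge-preservation to the isometry condition $d_2(Tx,Tz) = d_1(x,z)$, so $\gamma_1$ and $\gamma_2$ are isometric.

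I do not expect a real obstacle here, since both directions are essentially bookkeeping once Remark \ref{lineariso} is in hand. The only subtle point worth flagging is the automatic invertibility of $T$ in the backward direction (to make sense of the identification of unit balls under a map that is a priori only assumed linear), and the automatic passage from \emph{distance-preserving} to \emph{gauge-preserving} in the forward direction, which is precisely what fails without the hypothesis $T(o_X) = o_Y$ and is the reason the statement is phrased in terms of \emph{linear} equivalence rather than affine equivalence.
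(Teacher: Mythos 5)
Your proposal is correct and follows essentially the same route as the paper: the forward direction is exactly the paper's argument (reduce to a linear isometry via Remark \ref{lineariso}, observe it is gauge-preserving, hence maps $K_1$ onto $K_2$). Your backward direction, which the paper leaves implicit in this corollary, is precisely the content of the paper's subsequent Lemma \ref{liniso}, and your observation that a linear map carrying $K_1$ onto a body with nonempty interior is automatically invertible is a worthwhile detail.
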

\begin{proof} Saying that $\gamma_1$ and $\gamma_2$ are isometric is precisely the same as stating that there exists a linear isometry $T:(X,\gamma_1)\rightarrow(X,\gamma_2)$. It is clear that
\begin{align*} \gamma_1(x) \leq 1 \Leftrightarrow \gamma_2(Tx) \leq 1,
\end{align*}
and hence $T(K_1) = K_2$. Since $T$ must be an injective affine transformation, we have that $K_1$ and $K_2$ are linearly equivalent. 

\end{proof}

\begin{remark} In the symmetric case, we can state that two norms are isometric if and only if their unit balls are affinely equivalent. This happens because any affine transformation mapping a centrally symmetric convex body onto another centrally symmetric convex body must be linear. In the asymmetric case, however, a ``small" translation of the unit ball leads to a non-isometric gauge, as we will see later. 
\end{remark}

\begin{lemma}\label{liniso} Let $(X,\gamma_X)$ and $(Y,\gamma_Y)$ be gauge spaces, and let $T:X\rightarrow Y$ be an isomorphism such that $T(K_X) = K_Y$, where $K_X$ and $K_Y$ are the unit balls of $(X,\gamma_X)$ and $(Y,\gamma_Y)$, respectively. Then $T$ is a gauge isometry.
\end{lemma}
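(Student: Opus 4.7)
The plan is to show directly that $T$ preserves gauges, i.e.\ $\gamma_Y(Tx) = \gamma_X(x)$ for every $x \in X$, and then observe that translation invariance of the induced distances together with the linearity of $T$ gives the isometry property at once. Since $T$ is a linear isomorphism, it commutes with positive scalings: for every $\lambda \geq 0$ we have $T(\lambda K_X) = \lambda T(K_X) = \lambda K_Y$. This is the only ``content'' of the lemma, so the main obstacle is merely organizing this observation.

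First I would unwind the definition of the Minkowski functional:
\begin{align*}
\gamma_Y(Tx) = \inf\{\lambda \geq 0 : Tx \in \lambda K_Y\}.
\end{align*}
Because $T$ is a bijection with $T(\lambda K_X) = \lambda K_Y$, the condition $Tx \in \lambda K_Y$ is equivalent to $x \in \lambda K_X$. Hence the two infima defining $\gamma_Y(Tx)$ and $\gamma_X(x)$ are taken over the same set of admissible $\lambda$, so $\gamma_Y(Tx) = \gamma_X(x)$.

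Finally, applying this to vectors of the form $z - x$ and using linearity of $T$,
\begin{align*}
d_X(x,z) = \gamma_X(z-x) = \gamma_Y\bigl(T(z-x)\bigr) = \gamma_Y(Tz - Tx) = d_Y(Tx, Tz),
\end{align*}
which is exactly the isometry condition. The only subtlety worth flagging is that we need $\lambda \geq 0$ throughout (so that $\lambda K_X$ and $\lambda K_Y$ are well-defined as scalings of convex bodies) and that $T$ being an isomorphism ensures $Tx \in \lambda K_Y$ really does translate back to $x \in \lambda K_X$; otherwise the argument is entirely formal.
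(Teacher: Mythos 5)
Your proof is correct and follows essentially the same route as the paper: both arguments reduce the lemma to showing that $T$ is gauge-preserving via the identity $T(\lambda K_X)=\lambda K_Y$ (the paper normalizes $x$ to the boundary of $K_X$ and uses positive homogeneity, while you compare the defining infima directly, which is an equivalent bookkeeping choice). The final step, passing from gauge-preservation to the isometry property via linearity, is the same in both.
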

\begin{proof} Let $x \in X$ be a non-zero vector. If $\gamma_X(x) = \alpha$, then $x/\alpha \in \partial K_X$, and since $T$ clearly takes the boundary of $K_1$ onto the boundary of $K_2$, we get that $T(x/\alpha) \in \partial K_2$. Hence
\begin{align*} \gamma_1(x) = \alpha\gamma_1\!\left(\frac{x}{\alpha}\right)\! = \alpha\gamma_2\!\left(\frac{Tx}{\alpha}\right)\! = \gamma_2(Tx).
\end{align*}
It follows that $T$ is gauge-preserving (the case where $x = o_X$ is trivial), and hence it is an isometry.

\end{proof}

\begin{prop}\label{noniso} Let $K \in \mathcal{K}_{\mathrm{o}}(X)$ induce a gauge $\gamma$ on  $X$. If $v \in X$ is a non-zero vector such that $K_2 := K+v \in \mathcal{K}_{\mathrm{o}}(X)$, then the gauge $\gamma_2$ given by $K_2$ is not isometric to $\gamma$.
\end{prop}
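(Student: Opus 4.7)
My plan is to argue by contradiction. Assume $\gamma$ and $\gamma_2$ are isometric; combining the corollary that every isometry is affine with Remark~\ref{lineariso} and Corollary~\ref{isoaff} produces a linear isomorphism $T\colon X\to X$ with $T(K)=K+v$, and the goal becomes deriving $v=0$.

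First I would iterate $T$. An easy induction shows $T^n(K)=K+w_n$, where $w_n:=\sum_{i=0}^{n-1}T^i(v)$ (equivalently $w_0=0$ and $w_{n+1}=v+T(w_n)$). Since $T$ is linear with $T(o)=o$ and $o\in\mathrm{int}(K)$, one has $o\in\mathrm{int}(T^nK)=\mathrm{int}(K+w_n)$, i.e.\ $-w_n\in\mathrm{int}(K)$ for every $n\ge 0$; in particular the sequence $(w_n)$ lies in the bounded set $-K$.

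Second, I would extract rigidity from the identity $T(K-K)=T(K)-T(K)=(K+v)-(K+v)=K-K$: the map $T$ preserves the centrally symmetric difference body $K-K$, so it is a linear isometry of the norm whose unit ball is $\tfrac{1}{2}(K-K)$. Hence $T$ is diagonalizable over $\mathbb{C}$, its eigenvalues lie on the unit circle, and the orbit $\{T^n\}_{n\in\mathbb{Z}}$ is precompact in $GL(X)$. Decomposing $v$ along the eigenspaces of $T$ and using boundedness of $(w_n)$, the $1$-eigencomponent of $v$ must vanish, so $v\in\mathrm{Im}(T-I)$; equivalently, there exists $w\in X$ with $Tw=w+v$, and then $K':=K-w$ satisfies $T(K')=K'$, so $K$ is a translate of a $T$-invariant body.

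The closing step, which I expect to be the main obstacle, turns this into a genuine contradiction. I would invoke the centroid $c_K$, which is affine-equivariant and thus satisfies $T(c_K)=c_{T(K)}=c_{K+v}=c_K+v$, so $(T-I)c_K=v$; more generally, for any other affine-equivariant ``center'' (Santal\'o point, John center, L\"owner center), the same relation holds, whence $T$ fixes every difference of two such centers of $K$. The plan is to combine these equivariance constraints with the compactness and semisimplicity of $T$, the boundedness of $(w_n)$, and the analogous constraint $-w_{-n}\in K$ coming from the backward iterates $T^{-n}(K)=K-\sum_{i=1}^n T^{-i}(v)$, in order to force $v=0$ and contradict $v\neq 0$. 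Organizing all these constraints -- orbit boundedness, spectral structure of $T$, and simultaneous equivariance of the canonical centers of $K$ -- into a single clean obstruction is the crux of the argument.
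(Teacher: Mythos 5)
Your reduction is correct and matches the paper's: an isometry between the two gauges yields a linear isomorphism $T$ with $T(K)=K+v$, and the entire content of the proposition is the claim that this forces $v=0$. Your intermediate steps are also sound --- the boundedness of $w_n=\sum_{i=0}^{n-1}T^i(v)$, the invariance of the difference body $K-K$ and the resulting semisimplicity of $T$ with unimodular spectrum, the conclusion $v\in\mathrm{Im}(T-I)$, and the centroid relation $(T-I)c_K=v$. But the step you flag as ``the crux'' is not a technical loose end that more work will close: it cannot be carried out, because the proposition as stated is false. Take $X=\mathbb{R}^2$, let $K$ be the disk of radius $2$ centred at $(1,0)$ (so $o\in\mathrm{int}K$), let $T$ be the rotation by $\pi/2$, and set $v=T(1,0)-(1,0)=(-1,1)\neq 0$. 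Then $K_2:=K+v$ is the disk of radius $2$ centred at $(0,1)$, which again contains the origin in its interior, and $T(K)=K_2$; by Lemma~\ref{liniso}, $T$ is then a gauge isometry from $(X,\gamma_K)$ onto $(X,\gamma_{K_2})$. This configuration satisfies every constraint you derive: $c_K=(1,0)$, $(T-I)c_K=v$, $K-c_K$ is $T$-invariant, and all orbits $(w_n)$ are bounded. Indeed, your own analysis identifies exactly when the statement fails: whenever some translate $K-w$ admits a linear symmetry $T$ with $(T-I)w\neq 0$ (off-centre disks, off-centre squares, off-centre regular polygons, \ldots), the gauges $\gamma_K$ and $\gamma_{K+(T-I)w}$ are isometric.

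For comparison, the paper's proof reaches the inclusion $T_0(\partial K)\subseteq \partial K+v$ and dismisses it as ``clearly a contradiction''; that dismissal is precisely the unproved (and, by the example above, unprovable) assertion that no linear map carries $K$ onto a nontrivial translate of itself --- the very point your proposal honestly isolates and cannot close. So your attempt is more careful than the published argument, but neither establishes the proposition. A correct statement along these lines would be: $\gamma_{K+v}$ is isometric to $\gamma_K$ if and only if $K+v$ is a linear image of $K$, equivalently if and only if $v=(T-I)w$ for some linear symmetry $T$ of some translate $K-w$ of $K$; the non-isometry claimed in the proposition therefore holds only under the additional hypothesis that no translate of $K$ has a nontrivial linear symmetry.
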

\begin{proof} Assume that there exists an isometry $T:(X,\gamma)\rightarrow(X,\gamma_2)$, and write $T= T_0 + y_0$ for some linear transformation $T_0$ and some vector $y_0 \in X$. Denote by $d_1$ and $d_2$ the distances given by $\gamma$ and $\gamma_2$, respectively. For any $x \in \partial K$, we have 
\begin{align*} d_2(T(o_X),T(x)) = d_2(y_0,T_0x+y_0) = \gamma_2(T_0x).
\end{align*}
On the other hand, we have $d_1(o_X,x) = \gamma(x) = 1$. It follows that $\gamma_2(T_0x) = 1$ for any $x \in \partial K$. Consequently, we have the inclusion
\begin{align*} T_0(\partial K) \subseteq \partial K + v,
\end{align*}
which is clearly a contradiction.

\end{proof}

\section{Polar gauges and dual gauges}\label{ident}

The \emph{dual space} $X^*$ of a finite-dimensional real vector space $X$ is the set
\begin{align*}X^* = \{f:X\rightarrow\mathbb{R}:f \ \mathrm{is \ linear}\}.
\end{align*}
It is clear that the dual space is a vector space with the same dimension as the original space. If $(X,\gamma)$ is a gauge space, then we can endow $X^*$ with the \emph{polar gauge} $\gamma^*:X^*\rightarrow\mathbb{R}$ defined as
\begin{align*} \gamma^*(f) = \max\{f(x):x \in K\},
\end{align*}
where $K\subseteq X$ is the unit ball of $\gamma$ (that is, $\gamma = \gamma_K$). Of course, it is not immediate that $\gamma^*$ is a gauge, and hence we prove this in the next proposition.

\begin{prop} The map $\gamma^*:X^*\rightarrow\mathbb{R}$ defined as above is a gauge. 
\end{prop}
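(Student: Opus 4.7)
The plan is to verify the three defining properties of a gauge in turn, after first noting that $\gamma^*$ is well-defined (i.e.\ takes finite values). Finiteness is immediate because $K$ is compact and $f$, being linear on a finite-dimensional space, is continuous; hence the $\max$ is attained.

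For \textbf{positivity and nondegeneracy}, I would first observe that $o \in K$ gives $\gamma^*(f) \geq f(o) = 0$ for every $f \in X^*$, and that $\gamma^*(0) = 0$ trivially. The nontrivial direction is: if $f \neq 0$, then $\gamma^*(f) > 0$. Here I would use that $o \in \mathrm{int}\,K$, so there exists $\varepsilon > 0$ with the Euclidean ball $B(o,\varepsilon)$ contained in $K$; since $f \not\equiv 0$, some $x_0 \in X$ satisfies $f(x_0) > 0$, and rescaling $x_0$ to lie in $B(o,\varepsilon) \subseteq K$ yields a point in $K$ on which $f$ is positive.

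For \textbf{positive homogeneity}, for $\alpha \geq 0$ we simply pull the scalar outside the max:
\begin{align*}
\gamma^*(\alpha f) \;=\; \max\{\alpha f(x) : x \in K\} \;=\; \alpha \max\{f(x) : x \in K\} \;=\; \alpha \gamma^*(f).
\end{align*}
For the \textbf{triangle inequality}, for $f, g \in X^*$,
\begin{align*}
\gamma^*(f+g) = \max_{x \in K}(f(x)+g(x)) \leq \max_{x \in K} f(x) + \max_{x \in K} g(x) = \gamma^*(f) + \gamma^*(g).
\end{align*}

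The proof is essentially routine; the only genuine step, and thus the ``main obstacle,'' is the nondegeneracy, which is exactly the place where the hypothesis $o \in \mathrm{int}\,K$ (rather than merely $o \in K$) is used. Without it, a linear functional vanishing on a supporting hyperplane through $o$ could make $\gamma^*$ vanish on a nonzero $f$.
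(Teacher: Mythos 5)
Your proof is correct and follows the same route as the paper: the triangle inequality argument (bounding $f(x)+g(x)$ pointwise on $K$ and then taking the maximum) is identical, and your verification of nondegeneracy via $o\in\mathrm{int}\,K$ simply spells out what the paper dismisses as immediate.
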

\begin{proof} Positivity, nondegeneracy, and positive homogeneity are immediate. We have to verify the triangle inequality. Let $f,g \in X^*$. Of course, for any $x \in K$ we have
\begin{align*} f(x) + g(x) \leq \gamma^*(f) + \gamma^*(g).
\end{align*}
Hence, taking the maximum of the left hand side over $x \in K$ we obtain that
\begin{align*}\gamma^*(f+g) \leq \gamma^*(f) + \gamma^*(g),
\end{align*}
as we wanted.

\end{proof}

Let $K \in \mathcal{K}_{\mathrm{o}}(X)$. The \emph{polar body} $K^{\circ}$ of $K$ is the convex body of the dual space $X^*$ defined as
\begin{align*} K^{\circ} = \{f \in X^*:f(x) \leq 1, \ \forall \ x \in K\}.
\end{align*}
We refer the reader to \cite{schneider} for more on polar bodies. In particular, we will assume without a proof that $K^{\circ} \in \mathcal{K}_{\mathrm{o}}(X^*)$. We also have the following result which will be useful later.

\begin{lemma}\label{dualpolar} Let $K \in \mathcal{K}_{\mathrm{o}}(X)$. Then $x \in K$ if and only if $f(x) \leq 1$ for any $f \in K^{\circ}$. 
\end{lemma}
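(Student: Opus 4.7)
The forward direction is a tautology: if $x \in K$, then by the very definition of $K^{\circ}$, every $f \in K^{\circ}$ satisfies $f(x) \leq 1$. So the content of the lemma lies in the converse, which I would prove by contrapositive: assuming $x \notin K$, I would construct an explicit $f \in K^{\circ}$ with $f(x) > 1$.

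The tool for this is the hyperplane separation theorem. Since $K$ is a convex body (hence compact and convex) and $\{x\}$ is a compact convex set disjoint from $K$, strict separation gives a nonzero linear functional $g \in X^{*}$ and a constant $c \in \mathbb{R}$ with
\begin{align*}
g(y) \leq c < g(x) \qquad \text{for every } y \in K.
\end{align*}
The next step is to show that $c > 0$. Plugging in $y = o_X \in K$ gives $c \geq 0$. If we had $c = 0$, then $g \leq 0$ on $K$; but since $o_X \in \mathrm{int}\,K$, the body $K$ contains a full neighborhood of the origin, forcing $g \equiv 0$, which contradicts $g(x) > c = 0$. Hence $c > 0$.

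Now I would simply rescale: set $f := g/c \in X^{*}$. From $g(y) \leq c$ on $K$ we obtain $f(y) \leq 1$ for every $y \in K$, so $f \in K^{\circ}$. On the other hand, $g(x) > c$ yields $f(x) > 1$, which is exactly the desired contradiction to the assumption ``$f(x) \leq 1$ for every $f \in K^{\circ}$''.

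The only step that requires any care is the positivity of $c$, which is where the hypothesis $o_X \in \mathrm{int}\,K$ (built into $\mathcal{K}_{\mathrm{o}}(X)$) is essential; without it one cannot normalize the separating functional to land in $K^{\circ}$. Everything else is a direct application of the separation theorem in finite-dimensional vector spaces.
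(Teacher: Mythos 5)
Your proof is correct and follows essentially the same route as the paper's: both settle the converse by producing, for $x \notin K$, a functional in $K^{\circ}$ whose value at $x$ exceeds $1$ via a separating/supporting hyperplane, with the hypothesis $o_X \in \mathrm{int}\,K$ used to normalize. The only cosmetic difference is that the paper supports $K$ at the radial boundary point $\lambda x$ (obtaining $f(x) = 1/\lambda$ explicitly), whereas you strictly separate $x$ from $K$ and rescale; both are sound.
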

\begin{proof} From the definition, $x \in K$ implies that $f(x) \leq 1$ for any $f \in K^{\circ}$. Hence we only have to prove the converse. If $x \notin K$, then $\lambda x \in K$ for some $\lambda < 1$ (recall that $K$ contains the origin as an interior point). Let $H$ be a hyperplane which supports $K$ at $\lambda x$. Since $X = \mathrm{span}\{x\}\oplus H$, we get that there exists a unique linear functional $f \in X^*$ such that $f(\lambda x) = 1$ and $f(y) = 0$ for any $y \in H$. From convexity we have $f(z) \leq 1$ for any $z \in K$, and thus $f \in K^{\circ}$. However, since $f(\lambda x) = 1$, we get
\begin{align*} f(x) = \frac{1}{\lambda} > 1.
\end{align*}
This contradiction concludes the proof. 

\end{proof}

\begin{prop} Let $\gamma = \gamma_K$ be a gauge in $X$. Then the unit ball of the polar gauge $\gamma^*$ is the polar body $K^{\circ}$ of $K$. 
\end{prop}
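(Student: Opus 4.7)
My plan is essentially to unravel definitions: the statement is an immediate consequence of how $\gamma^*$ and $K^{\circ}$ are defined, so the entire argument amounts to checking that the two descriptions of the same set agree.

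First, I would write the unit ball of $\gamma^*$ as
\begin{align*}
B_{\gamma^*} = \{f \in X^* : \gamma^*(f) \leq 1\} = \Bigl\{f \in X^* : \max_{x \in K} f(x) \leq 1\Bigr\}.
\end{align*}
Then I would observe that the condition $\max_{x \in K} f(x) \leq 1$ is equivalent to the pointwise condition $f(x) \leq 1$ for every $x \in K$: one direction is obvious, and the other uses the fact that $K$ is compact so that the maximum is attained and bounds every value $f(x)$ for $x \in K$. This latter pointwise condition is exactly the defining condition for $f \in K^{\circ}$, so $B_{\gamma^*} = K^{\circ}$.

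Since $\gamma^*$ is already known (by the previous proposition) to be a gauge, its unit ball automatically belongs to $\mathcal{K}_{\mathrm{o}}(X^*)$; combining this with the identification above recovers in passing the fact that $K^{\circ} \in \mathcal{K}_{\mathrm{o}}(X^*)$, which the authors invoke without proof. The only subtlety I anticipate (and it is a mild one) is making sure that the ``$\max$'' in the definition of $\gamma^*$ is actually attained; but this follows from compactness of $K$ and continuity of $f$, so there is no genuine obstacle. The proof is purely a matter of rewriting two set-definitions until they coincide.
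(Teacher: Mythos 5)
Your proof is correct and follows essentially the same route as the paper's: both arguments simply unwind the definitions of $B_{\gamma^*}$ and $K^{\circ}$ and check that the condition $\gamma^*(f)\leq 1$ is equivalent to $f(x)\leq 1$ for all $x\in K$. The extra remarks about attainment of the maximum and recovering $K^{\circ}\in\mathcal{K}_{\mathrm{o}}(X^*)$ are fine but not needed.
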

\begin{proof} Denote by $B_{\gamma^*}$ be unit ball of the polar gauge $\gamma^*$. If $f \in B_{\gamma^*}$, then $\gamma^*(f) \leq 1$, which implies that $f(x) \leq 1$ for any $x \in K$. Hence $f \in K^{\circ}$. Conversely, if $f \in K^{\circ}$, then $f(x) \leq 1$ for any $x \in K$. Therefore, $\gamma^*(f) \leq 1$. It follows that $B_{\gamma^*} = K^{\circ}$. 

\end{proof}

As expected, we can also prove that the polar spaces associated to isometric gauge spaces are isometric. In what follows, we recall that the \emph{adjoint} of a linear map $T:X\rightarrow Y$ is the (linear) map $T^*:Y^*\rightarrow X^*$ defined as
\begin{align*} T^*\!(f)(x) = f(T(x)),
\end{align*}
for any $f \in Y^*$ and $x \in X$. 

\begin{prop}\label{polariso} If $(X,\gamma_X)$ and $(Y,\gamma_Y)$ are isometric gauge spaces, then their respective polar gauge spaces $(X^*,\gamma_X^*)$ and $(Y^*,\gamma_Y^*)$ are also isometric. Moreover, if $T:(X,\gamma_X)\rightarrow(Y,\gamma_Y)$ is a linear isometry, then $T^*:(Y,\gamma_Y^*)\rightarrow (X,\gamma_X^*)$ is a (linear) isometry.
\end{prop}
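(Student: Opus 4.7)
The plan is to reduce the first statement to the second one and then prove that $T^*$ maps the polar unit ball onto the polar unit ball, appealing to Lemma \ref{liniso} to conclude that $T^*$ is an isometry of the polar gauge spaces.

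First, by Remark \ref{lineariso}, if $(X,\gamma_X)$ and $(Y,\gamma_Y)$ are isometric, then they are linearly isometric. Hence the first claim reduces to the second, so I would only need to handle a linear isometry $T\colon(X,\gamma_X)\to(Y,\gamma_Y)$. From Corollary \ref{isoaff} together with Lemma \ref{liniso} (or just from the argument inside Lemma \ref{liniso}), $T$ is an isomorphism satisfying $T(K_X)=K_Y$, where $K_X, K_Y$ are the respective unit balls.

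The central step is then to show that $T^*(K_Y^{\circ})=K_X^{\circ}$. Since adjoint of an isomorphism is an isomorphism, $T^*\colon Y^*\to X^*$ is itself an isomorphism, and by Lemma \ref{liniso} it will suffice to verify the set equality. For the inclusion $T^*(K_Y^{\circ})\subseteq K_X^{\circ}$, take $g\in K_Y^{\circ}$; for every $x\in K_X$ one has $Tx\in T(K_X)=K_Y$, so $(T^*g)(x)=g(Tx)\leq 1$, whence $T^*g\in K_X^{\circ}$. For the reverse inclusion, given $f\in K_X^{\circ}$, define $g\in Y^*$ by $g:=f\circ T^{-1}$, so that $T^*g=f$. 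For any $y\in K_Y=T(K_X)$ write $y=Tx$ with $x\in K_X$; then $g(y)=f(x)\leq 1$, hence $g\in K_Y^{\circ}$ and $f=T^*g\in T^*(K_Y^{\circ})$. This gives $T^*(K_Y^{\circ})=K_X^{\circ}$.

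Finally, since $K_Y^{\circ}$ and $K_X^{\circ}$ are the unit balls of $\gamma_Y^*$ and $\gamma_X^*$ respectively (as established in the preceding proposition), Lemma \ref{liniso} applied to the isomorphism $T^*\colon Y^*\to X^*$ immediately yields that $T^*$ is a linear gauge isometry from $(Y^*,\gamma_Y^*)$ onto $(X^*,\gamma_X^*)$. The main obstacle — though not a conceptually deep one — is just keeping the direction of the arrows straight: $T$ goes from $X$ to $Y$ but $T^*$ goes from $Y^*$ to $X^*$, and crucially one must use both inclusions $T(K_X)\subseteq K_Y$ \emph{and} $T(K_X)\supseteq K_Y$ (which is where the assumption that $T$ is a linear isometry, as opposed to a mere contraction, really gets used).
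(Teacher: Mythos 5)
Your proof is correct and rests on the same key fact as the paper's, namely that a linear isometry satisfies $T(K_X)=K_Y$; the paper just verifies gauge-preservation directly by computing $\gamma_X^*(T^*f)=\max\{f(Tx):x\in K_X\}=\max\{f(y):y\in K_Y\}=\gamma_Y^*(f)$, whereas you establish the set equality $T^*(K_Y^{\circ})=K_X^{\circ}$ and then invoke Lemma \ref{liniso} together with the identification of $K^{\circ}$ as the unit ball of the polar gauge. Both routes are valid and essentially equivalent, so there is nothing to fix.
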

\begin{proof} By Remark \ref{lineariso}, if $(X,\gamma_X)$ and $(Y,\gamma_Y)$ are isometric, then they are linearly isometric. As usual, denote by $K_X$ and $K_Y$ the unit balls of the gauges $\gamma_X$ and $\gamma_Y$, respectively. Let $T:X\rightarrow Y$ be a linear isometry, and denote by $T^*$ its adjoint. Then, since $T(K_X) = K_Y$, we have
\begin{align*} \gamma_X^*(T^*\!f) = \max\{f\circ T(x):x \in K_X\} = \max\{f(y):y \in K_Y\} = \gamma_Y^*(f),
\end{align*}
for any $f \in Y^*$. Hence $T^*$ is gauge-preserving. 

\end{proof}

If $X$ is an even-dimensional vector space, then the fixation of a nondegenerate bilinear form $\omega:X\times X\rightarrow\mathbb{R}$ (a \emph{symplectic form}) yields an identification between $X$ and $X^*$
by contraction in the first coordinate. That is, for any $f \in X^*$ there exists a unique vector $x_f \in X$ such that
\begin{align*} f(\cdot) = \iota_{x_f}\omega(\cdot) = \omega(x_f,\cdot).
\end{align*}
This yields an isomorphism $\mathcal{I}:X^*\rightarrow X$. Let $\gamma_K$ be the gauge defined by a convex body $K\in\mathcal{K}_{\mathrm{o}}(X)$. The image $\mathcal{I}({K^{\circ}})$ of the polar body $K^{\circ}$ under the identification $\mathcal{I}$ of $X^*$ and $X$ given by $\omega$ (and defined above) will be denoted by $K^{\omega}$ and will be called the \emph{dual body} of $K$. Notice that since the polar body satisfies $K^{\circ} \in \mathcal{K}_{\mathrm{o}}(X^*)$, it follows that $K^{\omega} \in \mathcal{K}_{\mathrm{o}}(X)$. Hence the dual body induces a gauge by
\begin{align*} \gamma_{K^{\omega}}(x) = \inf\{\lambda \geq 0 : x \in \lambda K^{\omega}\},
\end{align*}
for $x \in X$. This is called the \emph{dual gauge} of $\gamma_K$. The next proposition asserts that the duality of polarity holds up to the sign under the identification given by the symplectic form. The dual gauge is an extension of the concept of \emph{anti-norm} for usual norms (see \cite{Mar-Swa}).

\begin{prop}\label{bidual} We have
\begin{align*}\gamma_{K^{\omega}}(x) = \max\{\omega(x,y):y \in K\}
\end{align*}
for any $x \in X$. Moreover, the dual gauge of $\gamma_{K^{\omega}}$ is $\gamma_{-K}$.  
\end{prop}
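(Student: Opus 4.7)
My plan is to handle the two assertions separately, deriving the second from the first plus the bipolar-type Lemma \ref{dualpolar}.

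For the formula $\gamma_{K^{\omega}}(x) = \max\{\omega(x,y): y \in K\}$, I would first define the functional $\phi(x) := \max\{\omega(x,y) : y \in K\}$ and verify it is a gauge: positive homogeneity and the triangle inequality are immediate from bilinearity of $\omega$ and the subadditivity of $\max$; nondegeneracy uses that $\omega$ is nondegenerate, so for $x \neq o_X$ the functional $\omega(x, \cdot)$ is nonzero, and since $o_X \in \mathrm{int}\, K$, some positive multiple of a witness vector lies in $K$, yielding $\phi(x) > 0$. Then I would check that the unit ball of $\phi$ is exactly $K^{\omega}$: unwinding the definition of $\mathcal{I}$, we have $\mathcal{I}^{-1}(x) = \omega(x, \cdot)$, so
\begin{align*}
x \in K^{\omega} \;\Longleftrightarrow\; \omega(x,\cdot) \in K^{\circ} \;\Longleftrightarrow\; \omega(x,y) \leq 1 \text{ for all } y \in K \;\Longleftrightarrow\; \phi(x) \leq 1.
\end{align*}
Since $\phi$ and $\gamma_{K^{\omega}}$ are both gauges with the same unit ball $K^{\omega}$, they coincide.

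For the second assertion, I would apply the first formula to the convex body $K^{\omega}$ in place of $K$, obtaining $\gamma_{(K^{\omega})^{\omega}}(x) = \max\{\omega(x,y) : y \in K^{\omega}\}$. It then suffices to show $(K^{\omega})^{\omega} = -K$. Using skew-symmetry of $\omega$, the condition $\omega(x,y) \leq 1$ for all $y \in K^{\omega}$ rewrites as $\omega(y,-x) \leq 1$ for all $y \in K^{\omega}$, which via the identification $\mathcal{I}$ says that every $f \in K^{\circ}$ satisfies $f(-x) \leq 1$. By Lemma \ref{dualpolar} (the bipolar-type result), this is equivalent to $-x \in K$, that is, $x \in -K$. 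Hence $(K^{\omega})^{\omega} = -K$, and the gauge with unit ball $-K$ is precisely $\gamma_{-K}$.

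The step I expect to require the most care is the second part: the sign bookkeeping between the skew-symmetry of $\omega$, the identification $\mathcal{I}$, and the polar duality must be handled carefully so that the minus sign ends up exactly on $K$ rather than on $x$ or inside the inequality. Once skew-symmetry is used to move the sign across and Lemma \ref{dualpolar} is invoked, the identification $(K^{\omega})^{\omega} = -K$ drops out cleanly.
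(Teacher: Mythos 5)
Your proposal is correct and follows essentially the same route as the paper: identify the unit ball of $x\mapsto\max\{\omega(x,y):y\in K\}$ with $K^{\omega}$ via the isomorphism $\mathcal{I}$, then apply the formula to $K^{\omega}$ and use skew-symmetry of $\omega$ together with Lemma \ref{dualpolar} to get $(K^{\omega})^{\omega}=-K$. The only cosmetic difference is that you organize both identifications as chains of equivalences (and spell out nondegeneracy of the candidate gauge) where the paper argues by two separate inclusions; the sign bookkeeping you flag is handled exactly as the paper handles it.
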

\begin{proof} First, notice that the map
\begin{align*} x \mapsto \max\{\omega(x,y):y \in K\}
\end{align*}
is a gauge in $X$. Hence it suffices to prove that its unit ball $B^{\omega}$ is the dual body $K^{\omega}$ of $K$. Let $x \in K^{\omega}$, and assume that $x = \mathcal{I}(f)$ for some $f \in K^{\circ}$. Then $f(y) \leq 1$ for any $y \in K$, and thus
\begin{align*} \omega(x,y) = \omega(\mathcal{I}(f),y) = f(y) \leq 1,
\end{align*}
for any $y \in K$. It follows that 
\begin{align*} \max\{\omega(x,y):y \in K\} \leq 1,
\end{align*}
and then $x \in B^{\omega}$. This shows that $K^{\omega} \subseteq B^{\omega}$. Assume now that $x \in B^{\omega}$, and let $f \in X^*$ be such that $x = \mathcal{I}(f)$. For any $y \in K$ we have
\begin{align*} f(y) = \omega(\mathcal{I}(f),y) = \omega(x,y) \leq 1,
\end{align*}
where the last inequality is justified since $x \in B^{\omega}$. It follows that $f \in K^{\circ}$, and hence $x = \mathcal{I}(f) \in K^{\omega}$. Thus we also have the reverse inclusion $B^{\omega} \subseteq K^{\omega}$. 

It remains to prove that the dual gauge of $\gamma_{K^{\omega}}$ is $\gamma_{-K}$. By definition, the dual gauge of $\gamma_{K^{\omega}}$ is the gauge whose unit ball is the convex body $(K^{\omega})^{\omega} = \mathcal{I}((K^{\omega})^{\circ})$, that is, the identification under $\mathcal{I}$ of the polar body of $K^{\omega}$. Thus we have to show that $(K^{\omega})^{\omega} = -K$. Let $x \in (K^{\omega})^{\omega}$. If $f \in K^{\circ}$, then $\mathcal{I}(f) \in K^{\omega}$, and hence
\begin{align*} f(-x) = \omega(x,\mathcal{I}(f)) \leq \max\{\omega(x,y):y \in K^{\omega}\} = \gamma_{(K^{\omega})^{\omega}}(x) \leq 1.
\end{align*}
Since this holds for any $f \in K^{\circ}$, we get from Lemma \ref{dualpolar} that $-x \in K$. This gives the inclusion $(K^{\omega})^{\omega} \subseteq -K$. To prove the reverse inclusion, let $x \in -K$. If $y \in K^{\omega}$, then $y = \mathcal{I}(f)$ for some $f \in K^{\circ}$, and thus
\begin{align*} \omega(x,y) = \omega(y,-x) = \omega(\mathcal{I}(f),-x) = f(-x) \leq 1,
\end{align*}
where the last inequality is justified since $-x \in K$ and $f \in K^{\circ}$. It follows that
\begin{align*} \gamma_{(K^{\omega})^{\omega}}(x) = \max\{\omega(x,y):y \in K^{\omega}\} \leq 1,
\end{align*}
which yields $x \in (K^{\omega})^{\omega}$. Therefore, we get that $-K = (K^{\omega})^{\omega}$, and hence the dual gauge of $\gamma_{K^{\omega}}$ is $\gamma_{-K}$. 

\end{proof}

\begin{coro} If $K^{\omega} = \alpha K$ for some $\alpha \neq 0$ and some symplectic form $\omega$ on $X$, then the gauge $\gamma_K$ is a norm. Moreover, if $X$ is two-dimensional, then $\gamma_K$ is a Radon norm. 
\end{coro}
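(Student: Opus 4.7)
My plan is to handle the two assertions separately. For the first, the key input is the biduality identity $(K^{\omega})^{\omega} = -K$ from Proposition~\ref{bidual}. Starting from the hypothesis $K^{\omega} = \alpha K$, I would apply the dual-body operation once more to both sides and compare with $-K$. The central auxiliary identity I need is
\begin{align*}
(\alpha K)^{\omega} = \frac{1}{\alpha}\, K^{\omega}, \qquad \alpha \neq 0.
\end{align*}
For $\alpha > 0$ this is immediate from positive homogeneity of the formula $\gamma_{K^{\omega}}(x) = \max\{\omega(x,y):y\in K\}$ established in Proposition~\ref{bidual}. For $\alpha < 0$ I would reduce to the positive case by writing $\alpha K = |\alpha|(-K)$ and invoking the short auxiliary fact $(-K)^{\omega} = -K^{\omega}$, itself a consequence of $(-K)^{\circ} = -K^{\circ}$ (which drops out of the definition of the polar body) together with linearity of the identification $\mathcal{I}$. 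Substituting the hypothesis into the displayed formula yields $(\alpha K)^{\omega} = (1/\alpha)(\alpha K) = K$, while Proposition~\ref{bidual} gives $(\alpha K)^{\omega} = (K^{\omega})^{\omega} = -K$. Hence $K = -K$ and $\gamma_K$ is a norm.

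For the planar case I would appeal to the classical Radon--Birkhoff characterization of Radon norms: a norm on a two-dimensional real vector space is Radon precisely when its unit ball is a (positive) scalar multiple of its dual body with respect to some symplectic form on $X$. This is exactly the content of our hypothesis $K^{\omega} = \alpha K$ once we know from the first part that $K = -K$, which allows us to replace $\alpha$ by $|\alpha| > 0$ without changing the set $\alpha K$. Since the dual gauge introduced here coincides with the anti-norm of \cite{Mar-Swa} in the symmetric case (as remarked before Proposition~\ref{bidual}), the classical characterization applies and gives that $\gamma_K$ is Radon.

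The main obstacle I anticipate is the sign bookkeeping needed for $\alpha < 0$ in the first part, which rests on the auxiliary identity $(-K)^{\omega} = -K^{\omega}$ not stated explicitly in the excerpt. The rest is either a short computation using Proposition~\ref{bidual} or an invocation of the classical Radon--Birkhoff result for the planar case, which I would cite rather than reprove.
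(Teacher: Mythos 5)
Your proof is correct and follows essentially the same route as the paper: both reduce to the case $K^{\omega}=K$ via a homogeneity identity (the paper rescales the symplectic form using Proposition~\ref{rescale}, while you equivalently compute $(\alpha K)^{\omega}=\alpha^{-1}K^{\omega}$, with the correct sign bookkeeping for $\alpha<0$) and then invoke the biduality $(K^{\omega})^{\omega}=-K$ to force $K=-K$. The planar case is likewise handled exactly as in the paper, by identifying $K^{\omega}$ with the unit anti-ball of \cite{Mar-Swa} and quoting the antinorm characterization of Radon norms.
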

\begin{proof} If $K^{\omega} = \alpha K$, then, up to rescaling $\omega$, we may assume that $K^{\omega} = K$ (this is given in detail in Proposition \ref{rescale} below). Thus we have
\begin{align*} -K = (K^{\omega})^{\omega} = K^{\omega} = K,
\end{align*}
which gives that $K$ is centrally symmetric. In the two-dimensional case, $K^{\omega}$ is precisely the unit anti-ball (see \cite{Mar-Swa}), from which it follows that $\gamma_K$ is a Radon norm. 

\end{proof}

\begin{remark} If $\gamma$ is a norm in a plane, then the dual norm of $\gamma$ is the associated \emph{anti-norm} (again, we refer the reader to \cite{Mar-Swa}). Of course, in this case the symmetry guarantees that $K = -K$ (where $K$ is the unit ball), and hence the anti-norm of the anti-norm is the original norm. 
\end{remark}

A natural question that arises is whether the hypothesis of the previous corollary can be relaxed: must $K$ be centrally symmetric provided the dual gauge $\gamma_{\omega}$ is isometric to $\gamma_K$? The answer is \textbf{no}, as the example below shows. 

\begin{example} Consider the Euclidean space $\mathbb{R}^2$ endowed with the gauge $\gamma$ given by the equilateral triangle $K$ whose vertices are the points $(0,2)$, $(\sqrt{3},-1)$, and $(-\sqrt{3},-1)$; notice that its barycenter is the origin. To make our argument easier, we identify $\mathbb{R}^2$ with its dual space $\mathbb{R}_2$ by using the standard inner product $\langle\cdot,\cdot\rangle$. In this case, the identification of the polar body $K^{\circ}$ in $\mathbb{R}^2$ becomes
\begin{align*} K^{\circ} = \{y \in \mathbb{R}^2:\langle y,x\rangle \leq 1 \ \mathrm{for \ all} \ x \in K\},
\end{align*}
where we are abusing a little of the notation, since originally we have that the polar body is a convex body of the dual space. With this identification, it is easy to see that the polar body of the equilateral triangle $K$ is the equilateral triangle whose vertices are the midpoints of the edges of $K$ (whose barycenter is also the origin). Now let $\omega = \mathrm{det}$ be the usual determinant of $\mathbb{R}^2$. Since
\begin{align*} \langle e_1,\cdot\rangle = \mathrm{det}(-e_2,\cdot) \ \mathrm{and}\\
\langle e_2,\cdot \rangle = \mathrm{det}(e_1,\cdot),
\end{align*}
it follows that the identification between $\mathbb{R}^2$ and its dual space given by the determinant is simply the clockwise $\pi/2$-rotation of the identification given by the inner product. Thus we obviously have
\begin{align*} K = 2\mathrm{id}\circ \mathrm{rot}_{3\pi/2}(K^{\omega}),
\end{align*}
where $\mathrm{id}$ is the identity transformation of $\mathbb{R}^2$ and $\mathrm{rot}_{3\pi/2}$ is the counter-clockwise $3\pi/2$-rotation of the plane. Notice that $\mathrm{rot}_{3\pi/2}$ is the composition of the counter-clockwise $\pi/2$-rotation with the reflexion through the $x$-axis. This is illustrated in Figure \ref{equilateral}.

\end{example}

\begin{figure}[h]
\centering
\includegraphics{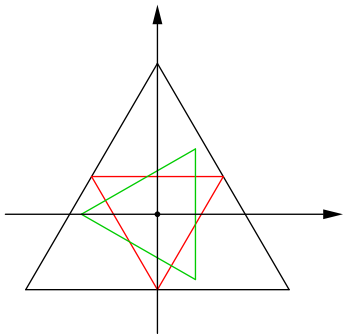}
\caption{The bodies $K$ (black), $K^{\circ}$ (red) and $K^{\omega}$ (green).}
\label{equilateral}
\end{figure}

Another natural question is whether an isometry of gauge spaces is also an isometry between their respective dual gauge spaces. We have that the dual gauges of isometric gauge spaces are isometric, but not necessarily by the \textbf{same} isometry. Recall that a \emph{symplectic linear map} (or \emph{linear symplectomorphism}) $T$ between symplectic vector spaces $(X,\omega_X)$ and $(Y,\omega_Y)$ is a linear map such that $\omega_Y(Tx,Ty) = \omega_X(x,y)$ for any $x,y \in X$. 

\begin{teo}\label{gaugeiso} Let $T:(X,\gamma_X,\omega_X)\rightarrow (Y,\gamma_Y,\omega_Y)$ be a linear isometry of gauge spaces. If $T$ is a symplectic linear map, then $T$ is an isometry between the respective dual gauges $\gamma_{\omega_X}$ and $\gamma_{\omega_Y}$. If $T$ is not symplectic, then the dual gauges are still isometric, but not necessarily by $T$. 
\end{teo}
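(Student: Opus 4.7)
The plan is to handle the two parts separately, relying principally on the description $\gamma_{K^\omega}(x) = \max\{\omega(x,y):y\in K\}$ from Proposition \ref{bidual} and on Proposition \ref{polariso}.

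For the symplectic case, since $T$ is a linear gauge isometry we have $T(K_X) = K_Y$, so a direct calculation gives
\begin{align*}
\gamma_{K_Y^{\omega_Y}}(Tx) = \max\{\omega_Y(Tx,Ty): y \in K_X\} = \max\{\omega_X(x,y): y \in K_X\} = \gamma_{K_X^{\omega_X}}(x),
\end{align*}
where the middle equality uses exactly the symplectic hypothesis $\omega_Y(T\cdot,T\cdot) = \omega_X(\cdot,\cdot)$. This shows $T$ is gauge-preserving between the dual gauges and hence an isometry.

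For the general case (no symplecticity assumed), the strategy is to transport the polar-gauge isometry given by Proposition \ref{polariso} through the two identifications $\mathcal{I}_X:X^*\to X$ and $\mathcal{I}_Y:Y^*\to Y$. Namely, I would define
\begin{align*}
S = \mathcal{I}_Y\circ (T^*)^{-1}\circ \mathcal{I}_X^{-1}:X\to Y,
\end{align*}
which is a composition of linear isomorphisms, and then track unit balls: $\mathcal{I}_X^{-1}(K_X^{\omega_X}) = K_X^\circ$ by definition of the dual body; $(T^*)^{-1}(K_X^\circ) = K_Y^\circ$, because Proposition \ref{polariso} says $T^*$ is gauge-preserving between the polar gauges and hence maps $K_Y^\circ$ onto $K_X^\circ$; and $\mathcal{I}_Y(K_Y^\circ) = K_Y^{\omega_Y}$ by definition. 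Thus $S(K_X^{\omega_X}) = K_Y^{\omega_Y}$, and Lemma \ref{liniso} yields that $S$ is a gauge isometry between the dual gauge spaces. Unwinding the definitions, $S$ coincides with $T$ precisely when $T$ is symplectic, which also recovers the first part.

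To justify that $T$ itself need not be such an isometry when symplecticity fails, the cleanest witness is obtained by varying only the symplectic form: take $X = Y$, $\gamma_X = \gamma_Y$, $T = \mathrm{id}$, and $\omega_Y = c\,\omega_X$ with $c>0$, $c\ne 1$. Then $T$ is trivially a linear gauge isometry but not symplectic, and Proposition \ref{bidual} gives $\gamma_{K_Y^{\omega_Y}}(x) = c\,\gamma_{K_X^{\omega_X}}(x)$, so $T$ fails to preserve the dual gauge. The only subtlety I anticipate is bookkeeping, namely consistently distinguishing the two identifications $\mathcal{I}_X$ and $\mathcal{I}_Y$ when unit balls are being compared; once that is in place every step is a definition chase.
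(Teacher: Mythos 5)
Your proof is correct and follows essentially the same route as the paper: the identical direct computation for the symplectic case, and for the general case the conjugation of the adjoint isometry $T^*$ (Proposition \ref{polariso}) by the identifications $\mathcal{I}_X$, $\mathcal{I}_Y$ --- your $S$ is just the inverse of the paper's map $T^{\omega}:=\mathcal{I}_X\circ T^*\circ\mathcal{I}_Y^{-1}$, with the unit-ball tracking via Lemma \ref{liniso} replacing the paper's one-line gauge computation. Your explicit rescaling counterexample for ``not necessarily by $T$'' is a small addition the paper omits, and it is correct.
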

\begin{proof} If an isometry $T:(X,\gamma_X)\rightarrow(Y,\gamma_Y)$ is also a symplectic linear map with respect to the fixed symplectic forms, then for any $x \in X$ we get
\begin{align*} \gamma_{\omega_Y}(Tx) = \max\{\omega_Y(Tx,y):y \in K_Y\} = \max\{\omega_Y(Tx,Tz):z \in K_X\} = \\ = \max\{\omega_X(x,z):z \in K_X\} = \gamma_{\omega_X}(x),
\end{align*}
where $K_Y$ and $K_X$ denote the unit balls of $\gamma_X$ and $\gamma_Y$, respectively. 

For the other claim, let $\mathcal{I}_X:X^*\rightarrow X$ and $\mathcal{I}_Y:Y^*\rightarrow Y$ be the usual identifications given by $\omega_X$ and $\omega_Y$, respectively. Since $T$ is an isometry, we have that its adjoint $T^*:Y^*\rightarrow X^*$ is an isometry between the polar gauges. Hence the map
\begin{align*} T^{\omega} := \mathcal{I}_X\circ T^*\circ\mathcal{I}_Y^{-1}:(Y,\gamma_{\omega_Y})\rightarrow (X,\gamma_{\omega_X})
\end{align*}
is an isometry of the dual gauges. Indeed, this is a composition of isometries. If $\gamma^*_X$ and $\gamma^*_Y$ denote the polar gauges in $X^*$ and $Y^*$, respectively, then
\begin{align*} \gamma_{\omega_X}(T^{\omega}y) = \gamma_X^*(T^*\circ\mathcal{I}_Y^{-1}y) = \gamma_Y^*(\mathcal{I}_Y^{-1}y) = \gamma_{\omega_Y}(y),
\end{align*}
for any $y \in Y$.

\end{proof}

\section{An orthogonality concept}

Based on Birkhoff orthogonality for normed spaces, we will define and briefly study an orthogonality relation between vectors in a gauge space $(X,\gamma)$. We say that a vector $x \in X$ is \emph{orthogonal} to a vector $y \in X$ (denoted by $x \dashv y$) whenever
\begin{align*} \gamma(x) \leq \gamma(x + ty),
\end{align*}
for any $t \in \mathbb{R}$. The reader may notice that this is a direct analogue of Birkhoff orthogonality for normed spaces (see the survey \cite{Alo-Mar-Wu} for orthogonality types in normed spaces). In the next proposition we state and prove some early properties of this orthogonality concept. For more on orthogonality concepts for gauges we refer the reader to \cite{jahn}. 

\begin{figure}[h]
\centering
\includegraphics{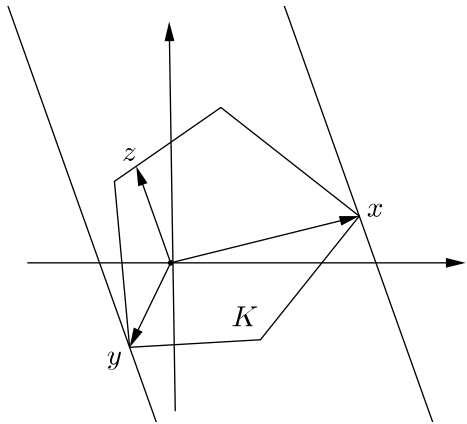}
\caption{$x \dashv z$ and $y \dashv z$.}
\label{ortho}
\end{figure}

\begin{prop} The orthogonality for gauge spaces has the following properties:\\

\noindent\emph{\textbf{i.\,(geometric interpretation)}} Let $x,y \in X$ be non-zero vectors, and assume that $\gamma(x) = r$. Then $x \dashv y$ if and only if the line $\{x+ty:t \in \mathbb{R}\}$ supports the planar disk $B_{\gamma}(r)\cap\mathrm{span}\{x,y\}$, where $B_{\gamma}(r) := \{z \in X : \gamma(z) \leq r\}$, at $x$ (see Figure \ref{ortho}). \\

\noindent\emph{\textbf{ii.\,(right-homogeneity)}} If $x\dashv y$, then $x \dashv \lambda y$ for any $\lambda \in \mathbb{R}$. \\

\noindent\emph{\textbf{iii.\,(positive left-homogeneity)}} If $x \dashv y$, then $\alpha x\dashv y$ for any $\alpha \geq 0$. 
\end{prop}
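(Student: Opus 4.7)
My plan is to handle the three items almost independently, since each is essentially a direct unpacking of the definition $\gamma(x) \leq \gamma(x+ty)$ for all $t \in \mathbb{R}$ combined with one basic property of $\gamma$ (convexity/support, substitution, or positive homogeneity).

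For \textbf{(i)}, the plan is to rewrite $x \dashv y$ as $\gamma(x+ty) \geq r$ for all $t$, i.e., $x+ty \notin \mathrm{int}\,B_\gamma(r)$ for every $t \in \mathbb{R}$. Setting $\ell := \{x+ty : t \in \mathbb{R}\}$ and $D := B_\gamma(r) \cap \mathrm{span}\{x,y\}$, the line $\ell$ lies entirely in $\mathrm{span}\{x,y\}$, so the condition above is the same as $\ell \cap \mathrm{int}\,D = \emptyset$. Since $\gamma(x) = r$ exactly means $x \in \partial B_\gamma(r)$, and since $x \in \ell \cap D$, the standard convexity statement (a line through a boundary point of a convex body that misses the body's interior is a support line) gives that $\ell$ supports $D$ at $x$. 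The converse is immediate from the same equivalence. I would invoke Proposition~1.1 (or its proof) for the geometric interpretation of a support line, to avoid reproving the convexity lemma.

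For \textbf{(ii)}, the plan is a trivial reparameterization. If $\lambda = 0$ the claim reduces to $\gamma(x) \leq \gamma(x)$. If $\lambda \neq 0$, then for every $s \in \mathbb{R}$ we have $\gamma(x + s(\lambda y)) = \gamma(x + (s\lambda)y) \geq \gamma(x)$ since $s\lambda$ ranges over all of $\mathbb{R}$ as $s$ does, so the inequality from $x \dashv y$ applies directly.

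For \textbf{(iii)}, the plan is to use positive homogeneity. The case $\alpha = 0$ is trivial since $\gamma(0)=0$. For $\alpha > 0$ and arbitrary $t \in \mathbb{R}$, I would write
\begin{align*}
\gamma(\alpha x + t y) = \alpha\, \gamma\!\left(x + \tfrac{t}{\alpha} y\right) \geq \alpha\, \gamma(x) = \gamma(\alpha x),
\end{align*}
where the middle inequality uses $x \dashv y$ with parameter $t/\alpha$. Hence $\alpha x \dashv y$.

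The only step requiring any care is \textbf{(i)}, and specifically the degenerate case where $y \in \mathrm{span}\{x\}$; there $\mathrm{span}\{x,y\}$ collapses to a line and the ``planar disk'' becomes a segment, but then $\ell$ is the full line through $o$ and $x$, which cuts through $\mathrm{int}\,B_\gamma(r)$ (since $o$ is interior) and hence cannot be a support line — consistent with the fact that the inequality $\gamma(x+ty) \geq \gamma(x)$ fails for $t$ chosen so that $x+ty = 0$. I would mention this briefly to make the equivalence watertight. Items (ii) and (iii) are essentially one-liners and I do not expect any obstacle there.
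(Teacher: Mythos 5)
Your proposal is correct and follows essentially the same route as the paper: item (i) is the observation that $x \dashv y$ is equivalent to the line missing the relative interior of $B_{\gamma}(r)\cap\mathrm{span}\{x,y\}$, item (ii) is a reparameterization of $t$, and item (iii) is positive homogeneity of $\gamma$. Your extra remarks on the degenerate case $y \in \mathrm{span}\{x\}$ and on $\lambda = 0$ are sound refinements that the paper simply glosses over.
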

\begin{proof} To prove \textbf{i}, we first assume that $x\dashv y$ for non-zero $x,y \in X$ with $\gamma(x) = r$, and write $B = B_{\gamma}(r)\cap\mathrm{span}\{x,y\}$. We have to prove that the line $\{x+ty:t\in\mathbb{R}\}$ has no points in the (relative) interior of the planar body $B$. This comes straightforwardly from the inequality
\begin{align*} \gamma(x+ty) \geq \gamma(x) = r,
\end{align*}
which holds for any $t \in \mathbb{R}$ (due to the orthogonality relation). Since $\gamma(z) < r$ for any $z \in \mathrm{int}B$, we have the desired. Conversely, if the line $\{x+ty:t\in\mathbb{R}\}$ supports $B$ at $x$, then $\gamma(x+ty) \geq r$ for any $t \in \mathbb{R}$, implying $x \dashv y$.

For \textbf{ii}, just notice that if $\gamma(x) \leq \gamma(x+ty)$ for any $t \in \mathbb{R}$, then clearly we have that $\gamma(x) \leq \gamma(x+t\lambda y)$ for any $t \in \mathbb{R}$. The last property comes from the positive homogeneity of $\gamma$. If $x \dashv y$, then for any $\alpha \geq 0$ we have
\begin{align*} \gamma(\alpha x) = \alpha\gamma(x) \leq \alpha\gamma(x+ty) = \gamma(\alpha x + t\alpha y),
\end{align*}
for any $t \in \mathbb{R}$. If $\alpha > 0$, then the inequality above gives $\gamma(\alpha x) \leq \gamma(\alpha x + ty)$ for any $t \in \mathbb{R}$, and the case $\alpha = 0$ is trivial. 

\end{proof}

As a consequence of \textbf{i}, we have that if $H$ is a hyperplane which supports the ball $B_{\gamma}(r)$ at $x$ (with $r > 0$), then $x \dashv y$ for any $y \in H$. Conversely, if $x\dashv y$, then $y$ is a vector of some hyperplane which supports $B_{\gamma}(r)$ at $x$, where $r = \gamma(x)$. In this case we also write $x \dashv H$, and the orthogonality is extended to a relation between vectors and hyperplanes. 

\begin{coro} Let $H \subseteq X$ be a hyperplane. Then there exist at least two distinct vectors $x,y \in B_{\gamma}$ such that $x \dashv H$ and $y \dashv H$. Moreover, if for any hyperplane $H\subseteq X$ these vectors are equal up to the sign, then the gauge is a norm. 
\end{coro}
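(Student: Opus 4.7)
The plan is to split the proof into (i) a compactness-based existence step and (ii) a polar-gauge reduction for the converse.

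For the first assertion, I would fix any $v\notin H$ and consider the parallel family $H_t = H+tv$, $t\in\mathbb{R}$. Because $K := B_\gamma$ is compact and $o\in\mathrm{int}\,K$, the set $\{t\in\mathbb{R} : H_t\cap K\neq\emptyset\}$ is a closed bounded interval $[t^-,t^+]$ with $t^-<0<t^+$. The extremes $H_{t^+}$ and $H_{t^-}$ support $K$ from opposite sides; selecting contact points $x\in H_{t^+}\cap\partial K$ and $y\in H_{t^-}\cap\partial K$ produces two points of $\partial B_\gamma\subseteq B_\gamma$ that satisfy $x\dashv H$ and $y\dashv H$ (by the geometric interpretation recorded immediately before the corollary) and that are distinct because they lie on different parallel hyperplanes.

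For the converse, I would translate the hypothesis into a symmetry statement for the polar gauge $\gamma^*$ introduced in Section~\ref{ident}. Take any $f\in X^*\setminus\{0\}$ and set $H = \ker f$. The two supporting hyperplanes of $K$ parallel to $H$ are the level sets $\{z : f(z)=\gamma^*(f)\}$ and $\{z : f(z)=-\gamma^*(-f)\}$; both miss the origin since $\gamma^*(\pm f)>0$ (which follows from $o\in\mathrm{int}\,K$). The hypothesis, combined with the existence step, produces $x\in\partial K$ with $x\dashv H$ and $-x\dashv H$, so the pair $\{x,-x\}$ realizes contact with these two hyperplanes. Since neither hyperplane contains the origin, $\{x,-x\}$ cannot both lie on the same one, which forces $f(x)$ and $-f(x)$ to equal the two distinct extremal values, yielding $\gamma^*(f)=\gamma^*(-f)$.

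Running this over all $f$ shows that $\gamma^*$ is a norm, or equivalently that $K^\circ$ is centrally symmetric about the origin. The polarity identity $(-K)^\circ = -K^\circ$ combined with the bipolar theorem then yields $K=-K$, and hence $\gamma=\gamma_K$ is itself a norm. The only slightly delicate point is the brief case analysis ruling out $\{x,-x\}$ lying on a single supporting hyperplane; this is the one place where $o\in\mathrm{int}\,K$ enters decisively, but the argument is short and the rest of the proof is just compactness, convexity, and the polar/dual dictionary already established in the paper.
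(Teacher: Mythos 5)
Your proof is correct, and for the existence claim it is essentially the paper's argument written out in full: the paper simply invokes the fact that any pair of parallel supporting hyperplanes of a convex body touches it in at least two boundary points, which is exactly what your parallel family $H_t = H + tv$ and the interval $[t^-,t^+]$ deliver (with $t^- < 0 < t^+$ recording where $o \in \mathrm{int}\,K$ enters). For the converse the two arguments genuinely diverge. The paper stays on the primal side: if every pair of parallel supporting hyperplanes touches $B_\gamma$ in an antipodal pair, then every boundary point has its antipode on the boundary, so $B_\gamma = -B_\gamma$ and $\gamma$ is a norm. You instead dualize: the antipodal contact points cannot lie on the same supporting hyperplane (neither passes through the origin), so they split between the two extremal level sets of $f$, giving $\gamma^*(f) = \gamma^*(-f)$ for every $f$, i.e. $K^\circ = -K^\circ$, and the bipolar theorem (essentially Lemma \ref{dualpolar}) pulls the symmetry back to $K$. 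Your route is longer but pins down precisely where the hypothesis and the condition $o \in \mathrm{int}\,K$ act, whereas the paper's one-sentence version leaves the central symmetry of $B_\gamma$ as an assertion for the reader (who must note that every boundary point is a contact point of some supporting hyperplane); both approaches reduce the corollary to the central symmetry of the unit ball, so nothing is gained or lost in generality.
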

\begin{proof} The first claim comes directly from the fact that any pair of parallel supporting  
hyperplanes supports a given convex body at two boundary points, at least.

For the other claim, notice that if each pair of parallel supporting hyperplanes supports $B_{\gamma}$ at points representable by $x$ and $-x$, then $B_{\gamma}$ is centrally symmetric. Hence $\gamma$ is a norm.

\end{proof}

A gauge $\gamma_K$ is \emph{smooth} if its unit ball $K$ is a \emph{smooth convex body}, meaning that $K$ is supported by only one hyperplane at each point of its boundary $\partial K$. If $\gamma$ is a smooth gauge, then for each $x \in K$ there exists a unique hyperplane $H$ such that $x \dashv H$. We also say that a gauge is \emph{strictly convex} if the boundary of its unit ball contains no line segment. This means that for each hyperplane $H \subseteq X$ there are precisely two vectors $x,y \in K$ such that $x\dashv H$ and $y \dashv H$.

\begin{prop} A gauge is strictly convex if and only if the triangle inequality is strict for linearly independent vectors.
\end{prop}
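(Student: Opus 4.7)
The plan is to prove both directions by contrapositive-style arguments that translate ``equality in the triangle inequality'' into ``a segment on the unit sphere'' and vice versa.

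For the forward direction, assume $\gamma$ is strictly convex and suppose, for contradiction, that there are linearly independent $x,y\in X$ with $\gamma(x+y)=\gamma(x)+\gamma(y)$. Set $a=\gamma(x)$, $b=\gamma(y)$, and normalize $u:=x/a$, $v:=y/b$, so that $u,v\in\partial B_\gamma$. The crucial observation is that
\begin{align*}
\frac{x+y}{\gamma(x+y)} \;=\; \frac{a u + b v}{a+b} \;=\; \frac{a}{a+b}\,u + \frac{b}{a+b}\,v,
\end{align*}
and by hypothesis this point also has gauge $1$. So we have three points on the convex function $\phi(t):=\gamma((1-t)u+tv)$ (namely $t=0$, $t=b/(a+b)$, $t=1$) all taking the value $1$. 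I would then use a short convexity argument: since $\phi$ is convex and $\phi(0)=\phi(1)=1$, we get $\phi\le 1$ on $[0,1]$; and if $\phi(t_0)=1$ at an interior $t_0$, writing any $s\in(0,t_0)$ in the form $t_0=\alpha s + (1-\alpha)\cdot 1$ with $\alpha>0$ forces $\phi(s)\ge 1$, so $\phi\equiv 1$ on $[0,t_0]$, and symmetrically on $[t_0,1]$. Since $x,y$ are linearly independent, $u\ne v$, so $[u,v]$ is a nondegenerate segment in $\partial B_\gamma$, contradicting strict convexity.

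For the converse, I argue contrapositively. Suppose $\gamma$ is not strictly convex, so $\partial B_\gamma$ contains a nondegenerate segment $[u,v]$ with $u\ne v$. First check that $u$ and $v$ must be linearly independent: if $v=\beta u$ with $\beta>0$ then $1=\gamma(v)=\beta\gamma(u)=\beta$, forcing $v=u$; and $\beta<0$ is impossible because the segment $[u,v]$ would then pass through the origin (where $\gamma=0$) while having $\gamma\equiv1$ along it. Taking the midpoint $(u+v)/2\in\partial B_\gamma$ gives
\begin{align*}
\gamma(u+v) \;=\; 2\,\gamma\!\left(\tfrac{u+v}{2}\right) \;=\; 2 \;=\; \gamma(u)+\gamma(v),
\end{align*}
so the triangle inequality is an equality on the linearly independent pair $(u,v)$, contradicting the hypothesis.

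The routine parts are the normalizations and homogeneity manipulations; the only slightly delicate step is the one-variable convexity lemma in the forward direction showing that a convex function which equals its ``chord value'' at some interior point must be constant on the whole interval. I expect that to be the main technical nub, but it is standard.
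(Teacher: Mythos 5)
Your proof is correct and follows essentially the same route as the paper's: normalize to put $x/\gamma(x)$, $y/\gamma(y)$, and $(x+y)/\gamma(x+y)$ on the unit sphere and use convexity to force the whole segment onto $\partial B_\gamma$, with the midpoint computation for the converse. You merely spell out two details the paper leaves implicit (the one-variable convexity lemma, and the check that the endpoints of a nondegenerate boundary segment are linearly independent), both of which are sound.
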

\begin{proof} First, let $x,y \in X$ be linearly independent vectors and assume that $\gamma(x+y) = \gamma(x) + \gamma(y)$. This gives
\begin{align*} \frac{\gamma(x)}{\gamma(x+y)} + \frac{\gamma(y)}{\gamma(x+y)} = 1,
\end{align*}
and hence
\begin{align*}\frac{x+y}{\gamma(x+y)} = \frac{\gamma(x)}{\gamma(x+y)}\frac{x}{\gamma(x)} + \frac{\gamma(y)}{\gamma(x+y)}\frac{y}{\gamma(y)} \in \mathrm{seg}\!\left[\!\frac{x}{\gamma(x)},\frac{y}{\gamma(y)}\!\right]\!.
\end{align*}
It follows that the segment joining $x/\gamma(x)$ and $y/\gamma(y)$ contains a relatively interior point in the boundary of $B_{\gamma}$. From convexity we get that the entire segment is contained in $\partial B_{\gamma}$. 

For the converse, notice that if $\mathrm{seg}[x,y]$ is a non-degenerate segment contained in $\partial B_{\gamma}$, then we clearly have $\gamma(x+y) = 2 = \gamma(x) + \gamma(y)$. 

\end{proof}

Recall that a point $x \in \partial K$ is said to be \emph{non-smooth} (or \emph{singular}) if there is more than one hyperplane which supports $K$ at $x$. The \emph{face} of a point $x \in K$ is the set containing $x$ and all the points $y \in K$ for which the line determined by $x$ and $y$ has an open segment containing $x$. Of course, if $x \in \mathrm{int}K$, then $F_x = K$. If $x \in \partial K$, then the face $F_x$ is clearly contained in $\partial K$, and it is called a \emph{proper face}. A proper face $F_x$ is \emph{non-degenerate} if $F_x \neq \{x\}$. For more on faces of convex bodies we refer the reader to \cite[Chapter 2]{schneider} and \cite[Chapter 1]{Bol-Mar-Sol}.

\begin{prop} Every singular point $x \in \partial K$ corresponds to a non-degenerate face in $\partial K^{\omega}$, and vice versa. In particular, $K^{\omega}$ is strictly convex if and only if $K$ is smooth, and the contrary also holds. 
\end{prop}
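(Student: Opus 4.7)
The plan is to associate to each boundary point $x \in \partial K$ an auxiliary exposed face of $K^{\omega}$ that encodes all the supporting hyperplanes of $K$ at $x$, and then read off the equivalence between singularity and non-degeneracy directly from this correspondence.

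First, for any $x \in K$ I would define $F_x^{*} := \{v \in K^{\omega} : \omega(v,x) = 1\}$. Using Proposition \ref{bidual}, one has $\omega(v,y) \leq 1$ for every $v \in K^{\omega}$ and $y \in K$, so each $v \in F_x^{*}$ automatically lies on $\partial K^{\omega}$ and the hyperplane $\{y \in X : \omega(v,y) = 1\}$ supports $K$ at $x$. Combining the symplectic identification $\mathcal{I}$ with the classical bijection between supports of $K$ at $x$ and elements of $K^{\circ}$ taking the value $1$ at $x$ (Lemma \ref{dualpolar}), I obtain a bijection between $F_x^{*}$ and the set of hyperplanes supporting $K$ at $x$. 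Consequently $x \in \partial K$ is singular if and only if $F_x^{*}$ contains at least two points.

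For the first direction, given a singular $x \in \partial K$ I pick two distinct $v_1, v_2 \in F_x^{*}$; convexity of $F_x^{*}$ forces $[v_1,v_2] \subseteq F_x^{*} \subseteq \partial K^{\omega}$, and the face (in the sense of the definition preceding the statement) of the midpoint contains this segment, hence is non-degenerate. For the converse, starting from a non-degenerate face $F_v \subseteq \partial K^{\omega}$ I would pick distinct $v_1, v_2 \in F_v$ with $v = (v_1+v_2)/2$, then use Proposition \ref{bidual} to select $x_0 \in K$ attaining $\gamma_{K^{\omega}}(v) = \omega(v,x_0) = 1$. A short homothety argument forces $x_0 \in \partial K$; the bounds $\omega(v_i,x_0) \leq 1$ together with the linearity identity $\omega(v_1,x_0) + \omega(v_2,x_0) = 2\omega(v,x_0) = 2$ force equality $\omega(v_i,x_0) = 1$ for $i=1,2$, placing $v_1,v_2 \in F_{x_0}^{*}$ and making $x_0$ singular by the first paragraph.

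The ``in particular'' statement then falls out: $K$ is smooth if and only if $\partial K$ carries no singular point, if and only if $\partial K^{\omega}$ carries no non-degenerate face, if and only if $K^{\omega}$ is strictly convex. For the reverse assertion ($K^{\omega}$ smooth if and only if $K$ strictly convex), I would apply the equivalence just proved with the roles of $K$ and $K^{\omega}$ interchanged, using Proposition \ref{bidual} to identify $(K^{\omega})^{\omega}$ with $-K$ and noting that smoothness and strict convexity are both invariant under $K \mapsto -K$. The main technical point is establishing the bijection in the first paragraph between supporting hyperplanes of $K$ at $x$ and points of $F_x^{*}$; once this identification is in place, the rest reduces to elementary convexity arguments inside $K^{\omega}$.
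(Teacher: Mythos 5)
Your proof is correct. The forward direction is essentially the paper's argument: a singular point $x$ yields two distinct supporting functionals with value $1$ at $x$, and the segment between them stays on the boundary of the polar (equivalently, of $K^{\omega}$ after applying $\mathcal{I}$), producing a non-degenerate face; your formulation in terms of $F_x^{*}=\{v\in K^{\omega}:\omega(v,x)=1\}$ is just the paper's computation with $\gamma^{*}$ transported through the identification $\mathcal{I}$. Where you genuinely diverge is the converse. The paper disposes of it in one line by applying the forward implication to $K^{\omega}$ and invoking the skew-duality $(K^{\omega})^{\omega}=-K$ from Proposition \ref{bidual}; you instead argue directly, taking a maximizer $x_0\in K$ of $\omega(v,\cdot)$ for the midpoint $v$ of a segment in a non-degenerate face and using the averaging identity $\omega(v_1,x_0)+\omega(v_2,x_0)=2$ to force both endpoints into $F_{x_0}^{*}$. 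Your route is more self-contained and makes the ``vice versa'' explicit rather than leaving it as a consequence of bidual skew-duality (which you still use, appropriately, only for the final ``contrary also holds'' assertion); the paper's route is shorter but leans on the reader to unwind the duality. One cosmetic remark: the bijection between supporting hyperplanes of $K$ at $x$ and elements of $K^{\circ}$ attaining the value $1$ at $x$ is not literally Lemma \ref{dualpolar}, though the construction of the relevant functional appears inside its proof; this is a citation imprecision, not a gap.
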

\begin{proof} Assume that $K$ is supported at $x \in \partial K$ by distinct hyperplanes $H_1$ and $H_2$, and let $f_1,f_2\in X^*$ be linear functionals such that $f_1(x) = f_2(x) = 1$, $\mathrm{ker}f_1 = H_1$ and $\mathrm{ker}f_2 = H_2$. Hence it is clear that $f_1,f_2 \in \partial K^{\circ}$, since $\gamma^*(f_1) = \gamma^*(f_2) = 1$. If $\lambda \in (0,1)$, then we claim that $\lambda f_1 + (1-\lambda)f_2 \in \partial K^{\circ}$. Indeed,
\begin{align*} \gamma^*(\lambda f_1 + (1-\lambda)f_2) = \lambda\gamma^*(f_1) + (1-\lambda)\gamma^*(f_2) = 1.
\end{align*}
It follows that $f_1$ and $f_2$ lie in some non-degenerate face of $\partial K^{\circ}$ (namely, in the face of $(f_1+f_2)/2$). Since a face of a convex body is preserved by a linear map, it follows that $\partial K^{\omega}$ contains a non-degenerate face. 

On the other hand, the fact that a face of $\partial K$ corresponds to a non-smooth point of $\partial K^{\omega}$ comes straightforwardly from the (skew-)duality $(K^{\omega})^{\omega} = -K$.

\end{proof}

In the next two sections we investigate in more detail how support and orthogonality behave under duality. 

\section{The two-dimensional case}\label{planarorth}

A two-dimensional vector space endowed with a gauge is called a \emph{gauge plane}, and the corresponding unit ball and unit sphere are called \emph{unit disk} and \emph{unit circle}, respectively.  In the case of gauge planes, we can use orthogonality to characterize ``where the dual gauge is attained". This is what we do next.

\begin{prop}\label{ortmax} Let $(X,\gamma)$ be a gauge plane with a fixed symplectic form $\omega$. Let $\gamma_{\omega}$ denote the dual gauge of $\gamma$. For any non-zero vector $x \in X$ we have that
\begin{align*} \gamma_{\omega}(x) = \omega(x,y_0)
\end{align*}
for $y_0 \in \partial B_{\gamma}$ if and only if $y_0 \dashv x$ and $\omega(x,y_0) > 0$. 
\end{prop}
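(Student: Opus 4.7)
The plan is to combine Proposition~\ref{bidual}, which gives $\gamma_\omega(x) = \max\{\omega(x,y) : y \in K\}$, with the geometric interpretation of orthogonality (item \textbf{i} of the proposition following the definition of $\dashv$). The key algebraic fact I would exploit is that a symplectic form is alternating, so $\omega(x,x) = 0$; hence in the plane the linear functional $\omega(x,\cdot)$ is constant along every line parallel to $x$. More precisely, for any $y_0 \in X$ the line $\ell = \{y_0 + tx : t \in \mathbb{R}\}$ is exactly the level set $\{y \in X : \omega(x,y) = \omega(x,y_0)\}$.

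For the forward direction, I would assume $y_0 \in \partial B_\gamma$ satisfies $\gamma_\omega(x) = \omega(x,y_0)$. Then $y_0$ maximizes the linear functional $\omega(x,\cdot)$ over the convex body $K = B_\gamma$, so the level line $\ell$ through $y_0$ is a supporting line of $K$ at $y_0$. By item \textbf{i}, this is precisely the statement $y_0 \dashv x$ (note that $y_0$ and $x$ must be linearly independent, since $\omega(x, \lambda x) = 0$ would force $\omega(x,y_0) = 0$). Positivity is immediate: since $x \neq 0$, nondegeneracy of $\gamma_\omega$ yields $\omega(x,y_0) = \gamma_\omega(x) > 0$.

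For the converse, I would start from $y_0 \in \partial B_\gamma$ with $y_0 \dashv x$ and $\omega(x,y_0) > 0$. By the geometric interpretation, $\ell$ supports $K$ at $y_0$, so $K$ lies in one of the two closed half-planes determined by $\ell$. The sign condition selects the right half-plane: since $o \in \mathrm{int}\,K$ and $\omega(x,o) = 0 < \omega(x,y_0)$, the body must lie in $\{y \in X : \omega(x,y) \leq \omega(x,y_0)\}$. Thus $y_0$ realizes the maximum defining $\gamma_\omega(x)$ in Proposition~\ref{bidual}, and the desired identity follows.

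The main obstacle is conceptual rather than computational: one must identify a supporting line of $K$ in the direction of $x$ with a level set of $\omega(x,\cdot)$, which is precisely what the alternating property of $\omega$ provides in dimension two. The sign hypothesis $\omega(x,y_0) > 0$ is essential here, since it is the mechanism that distinguishes the maximizer of $\omega(x,\cdot)$ from the point on the opposite side of $K$ (which is orthogonal to $x$ in the same Birkhoff sense but gives $\gamma_{-K^\omega}$ rather than $\gamma_{K^\omega}$).
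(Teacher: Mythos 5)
Your proposal is correct and follows essentially the same route as the paper: both arguments rest on the identity $\omega(x,y_0+tx)=\omega(x,y_0)$ (i.e.\ that $\omega(x,\cdot)$ is constant along the line $y_0+\mathbb{R}x$) together with the geometric characterization of $y_0\dashv x$ as that line supporting $B_\gamma$ at $y_0$. Your half-plane/level-set phrasing is a slightly cleaner packaging of the paper's computation, which instead parametrizes the positive boundary arc radially as $(y_0+tx)/\gamma(y_0+tx)$ and uses $\gamma(y_0+tx)\geq 1$; the mathematical content is the same.
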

\begin{proof} Let $x$ be a non-zero vector, and assume that $y_0 \in B_{\gamma}$ is a vector such that $y_0 \dashv x$ and $\omega(x,y_0) >0$. This means that the line in the direction of $x$ supports the unit disk $B_{\gamma}$ at $y_0$. The (open) arc of $\partial B_{\gamma}$ which contains $y_0$ is the set
\begin{align*} \partial_x^+B_{\gamma}:=\!\left\{\!\frac{y_0+tx}{\gamma(y_0+tx)}:t \in \mathbb{R}\!\right\}\!,
\end{align*}
and it is easy to see that for a given vector $y \in \partial B_{\gamma}$ we have that $\omega(x,y) > 0$ if and only if $y \in \partial_x^+B_{\gamma}$. Thus,
\begin{align*} \gamma_{\omega}(x) = \max\{\omega(x,y):y \in \partial^+_xB_{\gamma}\} = \max\!\left\{\!\frac{\omega(x,y_0)}{\gamma(y_0+tx)}:t \in \mathbb{R}\!\right\}\!,
\end{align*}
and since $\gamma(y_0 + tx) \geq \gamma(y_0) = 1$ for any $t \in \mathbb{R}$, we get that $\gamma_{\omega}(x) = \omega(x,y_0)$. Conversely, assume that $y_0 \in B_{\gamma}$ is such that
\begin{align*} \gamma_{\omega}(x) = \omega(x,y_0).
\end{align*}
It is obvious that $\omega(x,y_0) > 0$, and hence we have to prove that $y_0 \dashv x$. For that sake notice the following: if the line $\{y_0+tx:t\in\mathbb{R}\}$ does not support $B_{\gamma}$ at $y_0$, then there exists a number $t_0 \in \mathbb{R}$ such that $y_0+t_0x \in \mathrm{int}B_{\gamma}$. In this case, we get that $\gamma(y_0+t_0x) < 1$, and thus
\begin{align*} \omega\!\left(\!x,\frac{y_0+t_0x}{\gamma(y_0+t_0x)}\!\right)\! = \frac{\omega(x,y_0)}{\gamma(y_0+t_0x)} > \omega(x,y_0),
\end{align*}
contradicting the fact that $\omega(x,y_0) = \max\{\omega(x,y):y \in B_{\gamma}\}$. It follows that the line $\{y_0+tx:t\in\mathbb{R}\}$ indeed supports $B_{\gamma}$ at $y_0$, and hence $y_0 \dashv x$. Figure \ref{dualattain} illustrates the situation.

\end{proof}

\begin{figure}
\centering
\includegraphics{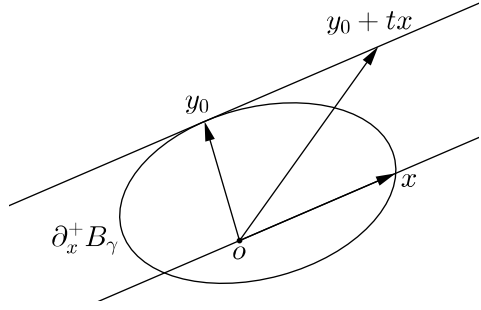}
\caption{$\gamma_{\omega}(x) = \omega(x,y_0)$.}
\label{dualattain}
\end{figure}

The next natural question is how our orthogonality concept behaves when we replace a gauge by its dual gauge (recall that the anti-norms reverse Birkhoff orthogonality). As a first step in this direction we notice that, as a consequence of Proposition \ref{ortmax}, we have an inequality involving the fixed symplectic form, the original gauge, and the induced dual gauge. This is explained next. The reader may notice that an analogue holds for norms and anti-norms.

\begin{prop}\label{ineqdual} For any $x,y \in X$ we have
\begin{align*} \omega(x,y) \leq \gamma_{\omega}(x)\gamma(y).
\end{align*}
If $(X,\gamma)$ is a gauge plane, then equality holds if and only if $y \dashv x$ and $\omega(x,y)  \geq 0$.
\end{prop}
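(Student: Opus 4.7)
The inequality is a one-line consequence of Proposition \ref{bidual}. The plan is to note that by that proposition, $\gamma_{\omega}(x) = \max\{\omega(x,z) : z \in K\}$, where $K$ is the unit ball of $\gamma$. If $y = 0$ both sides vanish, and if $y \neq 0$ then $y/\gamma(y) \in \partial K \subseteq K$, so
\begin{align*}
\omega\!\left(x,\frac{y}{\gamma(y)}\right) \leq \gamma_{\omega}(x),
\end{align*}
and multiplying through by the positive scalar $\gamma(y)$ yields the claimed inequality.

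For the equality case in a gauge plane, the strategy is to reduce to Proposition \ref{ortmax} by rescaling $y$ to the unit circle. First I would dispose of the degenerate cases: if $x = 0$ or $y = 0$, both sides are zero, and $y \dashv x$ holds trivially in either case (with $\omega(x,y) = 0$), so equality and the right-hand condition are simultaneously satisfied. So assume $x, y \neq 0$ and set $y_0 = y/\gamma(y) \in \partial B_{\gamma}$.

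For the forward direction, suppose $y \dashv x$ and $\omega(x,y) \geq 0$. By positive left-homogeneity of $\dashv$ (property \textbf{iii} of the previous orthogonality proposition), $y_0 \dashv x$. If $\omega(x,y) > 0$ then $\omega(x, y_0) > 0$ and Proposition \ref{ortmax} gives $\gamma_{\omega}(x) = \omega(x, y_0)$, so multiplying by $\gamma(y)$ recovers $\gamma_{\omega}(x)\gamma(y) = \omega(x,y)$. The remaining possibility $\omega(x,y) = 0$ (with $x,y$ both nonzero) would force $x$ and $y_0$ to be linearly dependent since $\omega$ is a nondegenerate alternating form on the plane; but then the line $\{y_0 + tx : t \in \mathbb{R}\}$ passes through the origin, contradicting the fact that $y_0 \dashv x$ places this line as a supporting line of $B_{\gamma}$ at $y_0 \in \partial B_{\gamma}$. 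So this subcase does not occur.

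For the converse, assume $\omega(x,y) = \gamma_{\omega}(x)\gamma(y)$ with $x,y$ nonzero. Then the right-hand side is strictly positive, so $\omega(x,y) > 0 \geq 0$, and dividing by $\gamma(y) > 0$ gives $\omega(x, y_0) = \gamma_{\omega}(x)$. Proposition \ref{ortmax} now yields $y_0 \dashv x$, and right-multiplying by $\gamma(y)$ via positive left-homogeneity gives $y \dashv x$, completing the equivalence. The only subtle step is the elimination of the $\omega(x,y) = 0$ subcase in the forward direction; everything else is a rescaling argument feeding directly into Proposition \ref{ortmax}.
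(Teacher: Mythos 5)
Your proof is correct and follows essentially the same route as the paper: the inequality by evaluating the maximum defining $\gamma_{\omega}(x)$ at $y/\gamma(y) \in K$, and the equality case by normalizing $y$ to the unit circle and invoking Proposition \ref{ortmax} together with positive left-homogeneity. Your elimination of the $\omega(x,y)=0$ subcase (via nondegeneracy of $\omega$ on the plane forcing $y_0 \in \mathrm{span}\{x\}$, so the candidate supporting line would pass through the interior point $o$) is a correct justification of the equivalence that the paper merely asserts without proof.
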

\begin{proof} We can assume that $x,y \in X$ are non-zero vectors, since otherwise the result is trivial. We have that
\begin{align*} \gamma_{\omega}(x) = \max\{\omega(x,y):y \in K\} \geq \omega(x,y),
\end{align*}
for any $y \in B_{\gamma}$. This gives
\begin{align*} \gamma_{\omega}(x) \geq \omega\!\left(\!x,\frac{y}{\gamma(y)}\!\right)\!,
\end{align*}
for any non-zero vector $y \in X$. This inequality gives immediately $\omega(x,y) \leq \gamma_{\omega}(x)\gamma(y)$. 

For the equality case we can also assume that $x,y\in X$ are non-zero vectors. Indeed, we notice that
\begin{align*} y \dashv x \ \mathrm{and} \ \omega(x,y) = 0 \Leftrightarrow x = 0_X \ \mathrm{or} \ y = 0_X,
\end{align*}
where $0_X$ denotes the zero vector of $X$. If $x,y$ are non-zero vectors of a gauge plane $(X,\gamma)$, and $\omega(x,y) = \gamma_{\omega}(x)\gamma(y)$, then
\begin{align*} \gamma_{\omega}(x) = \omega\!\left(\!x,\frac{y}{\gamma(y)}\!\right)\!,
\end{align*}
and Proposition \ref{ortmax} gives that 
\begin{align*} \frac{y}{\gamma(y)} \dashv x \ \mathrm{and} \ \omega\!\left(\!x,\frac{y}{\gamma(y)}\!\right)\!> 0,
\end{align*}
which yields $y \dashv x$ and $\omega(x,y) > 0$ (recall that orthogonality is positively left-homogeneous). Conversely, if $x,y \in X$ are non-zero vectors such that $y \dashv x$ and $\omega(x,y) > 0$, then $z = y/\gamma(y)$ is such that $z \in B_{\gamma}$, $z \dashv x$ and $\omega(x,z) > 0$. Hence
\begin{align*} \gamma_{\omega}(x) = \omega(x,z) = \omega\!\left(\!x,\frac{y}{\gamma(y)}\!\right)\!,
\end{align*}
and then we get $\omega(x,y) = \gamma_{\omega}(x)\gamma(y)$. 

\end{proof}

In the case of non-symmetric gauge planes, the duality does not reverse the or\-tho\-go\-na\-li\-ty relation. This happens because the bi-dual gauge of $\gamma_K$ is $\gamma_{-K}$, as Proposition \ref{bidual} shows. The dual gauge reverses orthogonality \textbf{and} orientation, as we will see next. Before presenting this, we need a technical lemma.
\begin{lemma}\label{oppgauge} Let $K \in \mathcal{K}_{o}(X)$. Then $\gamma_K(-x) = \gamma_{-K}(x)$ for any $x \in X$. Moreover, if $\dashv_{K}$ denotes the orthogonality relation given by $\gamma_K$, then we have that
\begin{align*} x \dashv_K y \Leftrightarrow -x \dashv_{-K} y,
\end{align*}
for any vectors $x,y \in X$. 
\end{lemma}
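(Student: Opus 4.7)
The proof has two parts and both are essentially bookkeeping with the definitions, so I will plan a compact, direct argument rather than anything clever.

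For the first identity, I would just unwind the Minkowski functional. By definition,
\begin{align*}
\gamma_K(-x) = \inf\{\lambda \geq 0 : -x \in \lambda K\} = \inf\{\lambda \geq 0 : x \in \lambda(-K)\} = \gamma_{-K}(x),
\end{align*}
where the middle equality uses $-x \in \lambda K \iff x \in -\lambda K = \lambda(-K)$ (valid because $\lambda \geq 0$, so $-\lambda K = \lambda(-K)$). This takes care of the first claim.

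For the orthogonality equivalence, the plan is to translate the definition of $\dashv_{-K}$ using the first part. By definition, $-x \dashv_{-K} y$ means $\gamma_{-K}(-x) \leq \gamma_{-K}(-x + ty)$ for every $t \in \mathbb{R}$. Applying $\gamma_{-K}(\,\cdot\,) = \gamma_K(-\,\cdot\,)$ to both sides rewrites this inequality as
\begin{align*}
\gamma_K(x) \leq \gamma_K(x - ty) \quad \text{for all } t \in \mathbb{R}.
\end{align*}

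The final step is to observe that the substitution $t \mapsto -t$ is a bijection of $\mathbb{R}$, so the previous condition is the same as $\gamma_K(x) \leq \gamma_K(x+ty)$ for all $t \in \mathbb{R}$, which is exactly $x \dashv_K y$. I do not expect any real obstacle here: the whole lemma is a consequence of positive homogeneity and the elementary identity $-\lambda K = \lambda(-K)$ for $\lambda \geq 0$, and the only subtlety is remembering to use that orthogonality is defined with $t$ ranging over all of $\mathbb{R}$ (not just $t \geq 0$), which makes the sign flip harmless.
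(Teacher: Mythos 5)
Your proof is correct and follows essentially the same definition-unwinding route as the paper; your first part works directly from the Minkowski functional (arguably cleaner than the paper's scaling-to-the-boundary argument), and your second part matches the paper's while being more explicit about the harmless $t \mapsto -t$ substitution that the paper glosses over.
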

\begin{proof} Assume that $z \in -K$. Then $\gamma_{-K}(z) = 1$. Since $-z \in K$, we have $\gamma_{K}(-z) = 1$. If $x \in X$ is a non-zero vector, then $x = \lambda z$ for some vector $z \in -K$. Hence
\begin{align*} \gamma_{-K}(x) = \lambda\gamma_{-K}(z) = \lambda\gamma_K(-z) = \gamma_K(-\lambda z) = \gamma_K(-x).
\end{align*}
The case where $x = 0_X$ is trivial. For the other claim, notice that $x \dashv_K y$ means that $\gamma_K(x) \leq \gamma_{K}(x+ty)$ for any $t \in \mathbb{R}$, and this is true if and only if 
\begin{align*} \gamma_{-K}(-x) \leq \gamma_{-K}(-x+ty)
\end{align*}
for any $t \in \mathbb{R}$, which is precisely the definition of $-x \dashv_{-K}y$. 

\end{proof}

\begin{teo} Let $(X,\gamma)$ be a gauge plane. As usual, denote by $K$ the unit ball of $\gamma$, by $\gamma_{\omega}$ the dual gauge of $\gamma$, and by $\dashv_{\omega}$ the orthogonality relation given by $\gamma_{\omega}$. For $x,y \in X$ with $\omega(y,x) > 0$, we have that $x \dashv_{\omega} y$ implies $-y \dashv x$. 
\end{teo}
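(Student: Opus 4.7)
My plan is to convert the dual-orthogonality condition $x\dashv_\omega y$ into an equality case of the Cauchy--Schwarz-type inequality in Proposition \ref{ineqdual}, and then read off $-y\dashv x$ directly from that equality case.

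First I would normalize: set $x_0 := x/\gamma_\omega(x)$, so that $x_0\in\partial B_{\gamma_\omega}$; by the positive left-homogeneity of $\dashv_\omega$ we still have $x_0\dashv_\omega y$, and $\omega(y,x_0)>0$ is inherited from $\omega(y,x)>0$. Next, I would apply Proposition \ref{ortmax} \emph{in the dual plane} $(X,\gamma_\omega)$: by Proposition \ref{bidual}, the dual gauge of $\gamma_\omega$ with respect to the same symplectic form $\omega$ is $\gamma_{-K}$, so the proposition, with $\gamma$ replaced by $\gamma_\omega$, yields
\begin{align*}
\gamma_{-K}(y) \;=\; \omega(y,x_0) \;=\; \frac{\omega(y,x)}{\gamma_\omega(x)}.
\end{align*}
Using Lemma \ref{oppgauge} to rewrite $\gamma_{-K}(y)=\gamma(-y)$ and clearing the denominator, this becomes
\begin{align*}
\omega(x,-y) \;=\; \omega(y,x) \;=\; \gamma_\omega(x)\,\gamma(-y).
\end{align*}

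Finally, I would apply the equality case of Proposition \ref{ineqdual} to the pair $(x,-y)$: since $\omega(x,-y)=\gamma_\omega(x)\gamma(-y)$ and $\omega(x,-y)=\omega(y,x)>0$, the equality case delivers exactly $-y\dashv x$, which is the desired conclusion.

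The main obstacle is the sign and orientation bookkeeping: the appearance of $-y$ rather than $y$ is forced by the fact that in the asymmetric setting the bidual is $\gamma_{-K}$ instead of $\gamma$ (Proposition \ref{bidual}), and one must check that each sign hypothesis of Propositions \ref{ortmax} and \ref{ineqdual} remains valid after switching between the arguments $(y,x)$, $(x,y)$, and $(x,-y)$; the assumption $\omega(y,x)>0$ is precisely what makes all three versions come out with the correct sign.
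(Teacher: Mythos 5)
Your proposal is correct and follows essentially the same route as the paper: both proofs convert $x\dashv_{\omega}y$ into the equality $\omega(y,x)=(\gamma_{\omega})_{\omega}(y)\,\gamma_{\omega}(x)$ via the bidual identity $(\gamma_{\omega})_{\omega}=\gamma_{-K}$ of Proposition \ref{bidual}, and then translate that equality back into $-y\dashv x$ in the original gauge. The only (immaterial) difference is that you invoke Proposition \ref{ortmax} in the dual plane and the equality case of Proposition \ref{ineqdual} in the original plane, whereas the paper does the reverse; since the two statements are normalized versions of one another, the arguments coincide.
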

\begin{proof} We may assume that $x,y \in X$ are non-zero vectors, and that $\omega(y,x) > 0$, since otherwise the result is trivial. Under these hypotheses, if $x \dashv_{\omega} y$, then Proposition \ref{ineqdual} (applied for $\gamma_{\omega}$ and its dual gauge) gives that
\begin{align*} \omega(y,x) = (\gamma_{\omega})_{\omega}(y)\cdot\gamma_{\omega}(x),
\end{align*}
where $(\gamma_{\omega})_{\omega}$ is, in particular, the bi-dual gauge of $\gamma$. From Proposition \ref{bidual}, we have that $(\gamma_{\omega})_{\omega} = \gamma_{-K}$, and hence the equality above yields
\begin{align*} \gamma_{\omega}(x) = \omega\!\left(\!x,\frac{-y}{\gamma_{-K}(y)}\!\right)\!.
\end{align*}
From Lemma \ref{oppgauge} we get
\begin{align*} \frac{-y}{\gamma_{-K}(y)} \in K.
\end{align*}
Hence Proposition \ref{ortmax} gives that 
\begin{align*} \frac{-y}{\gamma_{-K}(y)} \dashv x,
\end{align*}
and from the positive left-homogeneity it follows that $-y \dashv x$. 

\end{proof}

\begin{coro} Under the same hypotheses and notation like that of the last theorem, we get that if $x \dashv y$, then $y \dashv_{\omega} x$. 
\end{coro}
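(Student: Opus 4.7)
The plan is to deduce the corollary by chaining two applications of Proposition~\ref{ineqdual}, reading it once in the ``orthogonality implies equality'' direction and once in the ``equality implies orthogonality'' direction; this avoids having to work directly with the variational definition of $\dashv_\omega$.

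First I would swap the first two arguments in the inequality of Proposition~\ref{ineqdual} and rewrite it as $\omega(y,x) \le \gamma_\omega(y)\,\gamma(x)$, whose non-trivial equality case is precisely $x \dashv y$ together with $\omega(y,x) > 0$. Both conditions hold by the standing hypotheses of the theorem, so this step records the equality $\omega(y,x) = \gamma_\omega(y)\,\gamma(x)$.

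Next I would apply Proposition~\ref{ineqdual} once more, but now with $\gamma$ replaced by its dual gauge $\gamma_\omega$; by Proposition~\ref{bidual}, the relevant bi-dual gauge is $\gamma_{-K}$. Substituting $a = -x$ and $b = y$ into the resulting inequality and invoking Lemma~\ref{oppgauge} to rewrite $\gamma_{-K}(-x) = \gamma(x)$, it reads $\omega(y,x) \le \gamma(x)\,\gamma_\omega(y)$; this is in fact an equality by the previous paragraph, and the positivity condition $\omega(-x,y) = \omega(y,x) > 0$ is also automatic. The equality clause of Proposition~\ref{ineqdual} applied to the gauge $\gamma_\omega$ therefore forces $y \dashv_\omega (-x)$, and the right-homogeneity property~ii of orthogonality (applied to $\gamma_\omega$ with $\lambda = -1$) upgrades this at once to $y \dashv_\omega x$.

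The main obstacle I foresee is purely notational bookkeeping: the double duality naturally lands us in $\gamma_{-K}$ rather than $\gamma$, and the sign-flip substitution $a = -x$, justified by Lemma~\ref{oppgauge} together with the antisymmetry of $\omega$ in its two arguments, is exactly what is required to close the loop and recover the desired orthogonality with respect to $\gamma_\omega$.
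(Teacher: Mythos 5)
Your argument is correct: both applications of the equality clause of Proposition~\ref{ineqdual} are legitimate (in step one you have $x \dashv y$ and $\omega(y,x)>0$, so equality $\omega(y,x)=\gamma_{\omega}(y)\gamma(x)$ holds; in step two the substitution $a=-x$, $b=y$ together with $\omega(-x,y)=\omega(y,x)$ and $\gamma_{-K}(-x)=\gamma(x)$ from Lemma~\ref{oppgauge} turns that same quantity into the equality case for the gauge $\gamma_{\omega}$ and its dual $\gamma_{-K}$, forcing $y\dashv_{\omega}(-x)$), and right-homogeneity finishes it. The paper proceeds differently in its bookkeeping: it first converts $x\dashv y$ into the bi-dual orthogonality $-x\dashv_{-K}-y$ via Lemma~\ref{oppgauge} and right-homogeneity, and then invokes the preceding theorem with $\gamma_{\omega}$ and $\gamma_{-K}$ playing the roles of $\gamma$ and $\gamma_{\omega}$, obtaining $y\dashv_{\omega}-x$. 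Since that theorem is itself proved by the ``apply Proposition~\ref{ineqdual} to the dual gauge'' trick, your proof essentially inlines the theorem rather than citing it; what you gain is a self-contained argument that never touches the relation $\dashv_{-K}$ (you only need the identity $\gamma_{-K}(-x)=\gamma(x)$ from Lemma~\ref{oppgauge}, not its orthogonality statement), at the cost of re-deriving in place what the theorem already packages. Both routes rely on the same two facts at bottom, namely the equality case of $\omega(x,y)\leq\gamma_{\omega}(x)\gamma(y)$ and the skew-duality $(\gamma_{\omega})_{\omega}=\gamma_{-K}$ from Proposition~\ref{bidual}.
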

\begin{proof} If $x \dashv y$, then Lemma \ref{oppgauge} gives that $-x \dashv_{-K} y$, and hence $-x\dashv_{-K}-y$. Since $\dashv_{-K}$ is the bi-dual orthogonality relation, and $\omega(-y,-x) = \omega(y,x) > 0$ (recall that we are under the same hypotheses like the previous proposition), we get that $-x \dashv_{-K}-y$ implies $y \dashv_{\omega} -x$. Due to the right-homogeneity we get $y \dashv_{\omega} x$. 

\end{proof}

\begin{remark} Notice that the theorem could be analogously obtained from the corollary if we would prove the latter first. However, the assumption that $x \dashv_{\omega} y$ would create the easier way, since then we could use the ``trick" of applying Proposition \ref{ineqdual} to the dual gauge $\gamma_{\omega}$. 
\end{remark}

Our last task in this section is to investigate what happens when we change the symplectic form. Since we are working in dimension $2$, we know that two symplectic forms are equal up to scalar multiplication. In the symmetric case, this yields that two anti-norms obtained from different symplectic forms are essentially the same, since their unit disks will the homothetic (see \cite{Mar-Swa}). For gauge planes the situation is slightly more complicated, since a different sign gives a different geometry.

\begin{prop}\label{rescale} Let $\omega_1$ and $\omega_2$ be symplectic forms in a gauge plane $(X,\gamma)$, and assume that $\omega_2 = \alpha\omega_1$ for some number $\alpha \neq 0$. Then $K^{\omega_1} = \alpha K^{\omega_2}$. Consequently, if $\alpha > 0$, then $\gamma_{\omega_2} = \alpha\gamma_{\omega_1}$, and if $\alpha < 0$, then $\gamma_{\omega_2} = -\alpha\gamma_{-\omega_1}$, where $\gamma_{-\omega_1}$ is the gauge whose unit disk is the body $-K^{\omega_1}$. 
\end{prop}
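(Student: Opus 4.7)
The plan is to work directly from the definition of the identification $\mathcal{I}$ and from the formula $\gamma_\omega(x)=\max\{\omega(x,y):y\in K\}$ proved in Proposition \ref{bidual}. Everything reduces to chasing the scalar $\alpha$ through these definitions, with only one subtle point when $\alpha<0$, where the maximum has to be recast as a maximum with $-\omega_1$.

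First I would establish the relation between the identifications $\mathcal{I}_1,\mathcal{I}_2:X^*\to X$ associated to $\omega_1$ and $\omega_2=\alpha\omega_1$. For any $f\in X^*$ we have
\begin{align*}
f(\cdot)=\omega_2(\mathcal{I}_2(f),\cdot)=\alpha\,\omega_1(\mathcal{I}_2(f),\cdot)=\omega_1(\alpha\mathcal{I}_2(f),\cdot),
\end{align*}
and since the vector representing $f$ via $\omega_1$ is unique, $\mathcal{I}_1(f)=\alpha\mathcal{I}_2(f)$. Applying this to every $f\in K^{\circ}$ yields
\begin{align*}
K^{\omega_1}=\mathcal{I}_1(K^{\circ})=\alpha\,\mathcal{I}_2(K^{\circ})=\alpha K^{\omega_2},
\end{align*}
which is the first claim.

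Next I would derive the two gauge identities from the characterization $\gamma_\omega(x)=\max\{\omega(x,y):y\in K\}$ of Proposition \ref{bidual}. If $\alpha>0$, then $\alpha$ can be factored out of the maximum directly:
\begin{align*}
\gamma_{\omega_2}(x)=\max\{\alpha\omega_1(x,y):y\in K\}=\alpha\max\{\omega_1(x,y):y\in K\}=\alpha\gamma_{\omega_1}(x).
\end{align*}
If $\alpha<0$, write $\alpha=-|\alpha|$; then
\begin{align*}
\gamma_{\omega_2}(x)=\max\{-|\alpha|\omega_1(x,y):y\in K\}=|\alpha|\max\{-\omega_1(x,y):y\in K\}=-\alpha\,\gamma_{-\omega_1}(x),
\end{align*}
where $\gamma_{-\omega_1}$ is the dual gauge corresponding to the symplectic form $-\omega_1$. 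To connect this with the phrasing of the proposition, I would remark that the identification associated with $-\omega_1$ is $\mathcal{I}_{-\omega_1}=-\mathcal{I}_{\omega_1}$ by the same computation as in the first step, so the unit disk of $\gamma_{-\omega_1}$ is $\mathcal{I}_{-\omega_1}(K^\circ)=-K^{\omega_1}$, matching the statement.

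The only real subtlety is the sign flip when $\alpha<0$, where one must not naively pull a negative scalar through a maximum; the fix is the rewriting $\alpha\omega_1=-|\alpha|\omega_1$ followed by the observation $\max(-\omega_1(x,\cdot))=\gamma_{-\omega_1}$. Everything else is a direct substitution, and the dimension hypothesis is only used implicitly via the existence of (and up-to-scalar uniqueness of) the symplectic form on the plane.
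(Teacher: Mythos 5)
Your proposal is correct and follows essentially the same route as the paper: the identity $\mathcal{I}_1(f)=\alpha\mathcal{I}_2(f)$ obtained by contracting $f$ against both forms is exactly the paper's computation, and your derivation of the gauge identities (including the sign-careful case $\alpha<0$ via $\max\{-\omega_1(x,\cdot)\}$) simply spells out what the paper dismisses as ``the rest is straightforward.'' No gaps.
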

\begin{proof} Denote by $\mathcal{I}_1:X^*\rightarrow X$ and $\mathcal{I}_2:X^*\rightarrow X$ the identifications given by $\omega_1$ and $\omega_2$, respectively. We have that $y \in K^{\omega_1}$ if and only if $y = \mathcal{I}_1(f)$ for some $f \in K^{\circ}$. In this case, 
\begin{align*} \omega_1(\mathcal{I}_1(f),x) = f(x) = \omega_2(\mathcal{I}_2(f),x) = \alpha\omega_1(\mathcal{I}_2(f),x) = \omega_1(\alpha\mathcal{I}_2(f),x),
\end{align*}
for any $x \in X$. Hence $y = \mathcal{I}_1(f) = \alpha\mathcal{I}_2(f)$. Since $\mathcal{I}_2(f) \in K^{\omega_2}$, we get that $y \in \alpha K^{\omega_2}$. The proof of the converse inclusion is similar, and the rest is straightforward. 

\end{proof}

\begin{coro} Let $\omega_1$ and $\omega_2$ be two symplectic forms on a gauge plane $(X,\gamma)$. If these forms have the same sign, then the associated orthogonality relations $\dashv_{\omega_1}$ and $\dashv_{\omega_2}$ coincide. If $\omega_1$ and $\omega_2$ have opposite signs, then $x \dashv_{\omega_1} y$ is equivalent to $-x \dashv_{\omega_2} y$. 
\end{coro}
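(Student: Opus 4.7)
The plan is to reduce everything to Proposition \ref{rescale} and Lemma \ref{oppgauge}, noting two elementary facts about the orthogonality relation: it is invariant under rescaling the gauge by a positive constant, and the definition $\gamma(x) \leq \gamma(x+ty)$ for all $t \in \mathbb{R}$ is symmetric under $t \mapsto -t$, so the sign of $y$ is irrelevant (this is just property \textbf{ii} of the orthogonality).

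Since $X$ is two-dimensional, the space of symplectic (i.e., nondegenerate skew-symmetric bilinear) forms on $X$ is one-dimensional, so we may write $\omega_2 = \alpha \omega_1$ for some $\alpha \neq 0$. First I would handle the same-sign case, where $\alpha > 0$: Proposition \ref{rescale} gives $\gamma_{\omega_2} = \alpha \gamma_{\omega_1}$, and then
\begin{align*}
x \dashv_{\omega_2} y \iff \gamma_{\omega_2}(x) \leq \gamma_{\omega_2}(x+ty) \ \forall t \iff \gamma_{\omega_1}(x) \leq \gamma_{\omega_1}(x+ty) \ \forall t \iff x \dashv_{\omega_1} y,
\end{align*}
where the positive factor $\alpha$ cancels. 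This proves the first assertion.

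For the opposite-sign case ($\alpha < 0$), Proposition \ref{rescale} gives $\gamma_{\omega_2} = -\alpha\, \gamma_{-\omega_1}$, where $\gamma_{-\omega_1}$ is the gauge whose unit disk is $-K^{\omega_1}$. The positive constant $-\alpha$ is again irrelevant for orthogonality, so $\dashv_{\omega_2}$ coincides with the orthogonality relation $\dashv_{-K^{\omega_1}}$ of the gauge $\gamma_{-K^{\omega_1}}$. Now I would apply Lemma \ref{oppgauge} to the body $K^{\omega_1}$: it yields $\gamma_{-K^{\omega_1}}(z) = \gamma_{K^{\omega_1}}(-z) = \gamma_{\omega_1}(-z)$, and correspondingly
\begin{align*}
x \dashv_{\omega_2} y \iff x \dashv_{-K^{\omega_1}} y \iff -x \dashv_{K^{\omega_1}} y \iff -x \dashv_{\omega_1} y.
\end{align*}
Replacing $x$ by $-x$ then gives the desired equivalence $x \dashv_{\omega_1} y \iff -x \dashv_{\omega_2} y$.

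I don't foresee a serious obstacle here; the statement is really a bookkeeping corollary of the two previous results. The only point requiring some care is making sure to use the correct direction of Lemma \ref{oppgauge} (applied to the body $K^{\omega_1}$ rather than to $K$ itself) and to remember that the scalar $-\alpha > 0$ in the opposite-sign case disappears from the orthogonality condition exactly as the scalar $\alpha > 0$ did in the first case.
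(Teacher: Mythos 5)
Your argument is correct and is precisely the intended one: the paper states this corollary without proof as an immediate consequence of Proposition \ref{rescale} (the positive scalar being irrelevant to the relation $\gamma(x)\leq\gamma(x+ty)$) together with Lemma \ref{oppgauge} applied to the body $K^{\omega_1}$. No gaps.
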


\section{The higher (even-)dimensional case}

In the previous section we investigated how orthogonality is related to duality for gauge planes. Now we want to extend these investigations to higher dimensional vector spaces. We have to require that the considered gauge space is even-dimensional such that we can fix a symplectic form on it. For the case of even-dimensional normed spaces, we refer the reader to \cite{Hor-Spi-Lan}.

When working on a gauge plane $(X,\gamma)$, we know that two symplectic forms are equal up to scalar multiplication, and hence it is not difficult to understand what happens when one changes the symplectic form. In higher (even-)dimensional vector spaces, we have ``more freedom" to choose a symplectic form, and then we expect duality to be more complicated. The first thing we can ask is whether two dual bodies of $K$ (corresponding to different symplectic forms) yield the same gauge geometry. We will prove that this is indeed true. 

\begin{teo} Let $\omega_1$ and $\omega_2$ be symplectic forms on an even-dimensional vector space $X$, and let $K\in\mathcal{K}_{\mathrm{o}}(X)$. Then the gauges induced by the dual bodies $K^{\omega_1}$ and $K^{\omega_2}$ are isometric. 
\end{teo}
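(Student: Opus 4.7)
The plan is to reduce the claim to the criterion for isometry of gauges expressed in terms of their unit balls, namely Corollary \ref{isoaff} (or equivalently Lemma \ref{liniso}), which tells us that two gauges on $X$ are isometric as soon as their unit balls are images of one another under a linear isomorphism. Since the unit ball of the dual gauge $\gamma_{K^{\omega_i}}$ is by definition $K^{\omega_i}=\mathcal{I}_i(K^{\circ})$, where $\mathcal{I}_i:X^*\rightarrow X$ is the isomorphism contracted from $\omega_i$ in the first slot, it suffices to exhibit a linear automorphism of $X$ sending $K^{\omega_1}$ onto $K^{\omega_2}$.

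The natural candidate is the composition
\begin{align*}
T \,:=\, \mathcal{I}_2\circ\mathcal{I}_1^{-1}:X\rightarrow X,
\end{align*}
which is a linear isomorphism as a composition of two linear isomorphisms, and which by construction satisfies
\begin{align*}
T(K^{\omega_1})\,=\,\mathcal{I}_2\circ\mathcal{I}_1^{-1}\bigl(\mathcal{I}_1(K^{\circ})\bigr)\,=\,\mathcal{I}_2(K^{\circ})\,=\,K^{\omega_2}.
\end{align*}
Note that $K^{\circ}\subseteq X^*$ is the same polar body in both cases, as it depends only on $K$ and the pairing between $X$ and $X^*$, not on any choice of symplectic form. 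Applying Lemma \ref{liniso} to $T$ then yields that $T$ is a gauge isometry between $(X,\gamma_{K^{\omega_1}})$ and $(X,\gamma_{K^{\omega_2}})$, which is exactly the statement.

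There is no real obstacle in the argument once one notices that the polar body $K^{\circ}$ lives in $X^*$ and is therefore independent of the choice of symplectic form: all the dependence on $\omega_i$ is funneled into the identification $\mathcal{I}_i$, and the two identifications differ precisely by the linear isomorphism $T$. The proof thus reduces to chaining together Corollary \ref{isoaff} (or Lemma \ref{liniso}) with the definition $K^{\omega_i}=\mathcal{I}_i(K^{\circ})$. In particular, no hypothesis on the relationship between $\omega_1$ and $\omega_2$ (beyond non-degeneracy) is needed, which contrasts sharply with the planar case of Proposition \ref{rescale}, where the two symplectic forms must be proportional and the sign of the scalar encodes whether one also has to pass to $-K^{\omega_1}$.
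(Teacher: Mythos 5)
Your proposal is correct and follows essentially the same route as the paper: the authors also take $T=\mathcal{I}_2\circ\mathcal{I}_1^{-1}$, verify $T(K^{\omega_1})=K^{\omega_2}$ (by checking both inclusions elementwise rather than by your one-line set computation), and conclude via Lemma \ref{liniso}. No gaps.
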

\begin{proof} Denote by $\gamma_1$ and $\gamma_2$ the gauges whose unit balls are $K^{\omega_1}$ and $K^{\omega_2}$, respectively. Let $\mathcal{I}_1:X^*\rightarrow X$ and $\mathcal{I}_2:X^*\rightarrow X$ be the identifications given by $\omega_1$ and $\omega_2$, respectively, as in Section \ref{ident}. We claim that the (linear) map
\begin{align*} T := \mathcal{I}_2\circ\mathcal{I}_1^{-1}:(X,\gamma_1)\rightarrow (X,\gamma_2)
\end{align*}
is an isometry. Of course, to see this we have to prove that $T$ is gauge-preserving. Indeed, if $x \in K^{\omega_1}$, then $\mathcal{I}_1^{-1}(x) \in K^{\circ}$. Hence $\mathcal{I}_2\circ\mathcal{I}_1^{-1}(x) \in K^{\omega_2}$. This gives that $T(K^{\omega_1})\subseteq K^{\omega_2}$. On the other hand, if $x \in K^{\omega_2}$, then $x = \mathcal{I}_2(f)$ for some $f \in K^{\circ}$, and $f = \mathcal{I}_1^{-1}(z)$ for some $z \in K^{\omega_1}$. It follows that $x = T(z)$, and this gives the inclusion $K^{\omega_2} \subseteq T(K^{\omega_1})$. From Lemma \ref{liniso}, the equality $T(K^{\omega_1}) = K^{\omega_2}$ yields that $T$ is an isometry.

\end{proof}

\begin{remark} Notice that both gauge spaces $(X,\gamma_1)$ and $(X,\gamma_2)$ are linearly isometric to the polar gauge space $(X^*,\gamma^*)$ by the maps $\mathcal{I}_1$ and $\mathcal{I}_2$, respectively. This justifies the construction of the isometry in the proof of the theorem. The result can also be seen as a consequence of the proof of the second part of Theorem \ref{gaugeiso}, setting $T = \mathrm{id}_X$.
\end{remark}

Let $Y$ be a subspace of the symplectic vector space $(X,\omega)$. We define its \emph{symplectic complement} to be 
\begin{align*} Y^{\perp} := \{x \in X:\omega(x,y) = 0 \ \mathrm{for \ all} \ y \in Y\}.
\end{align*}
For more on symplectic linear algebra we refer the reader to \cite[Chapter 2]{mcduff}. As we will see now, the symplectic complement of a given direction plays an important role in characterizing where the dual norm is attained. In what follows, for each non-zero vector $x \in X$ we denote by
\begin{align*} \{x\}^{\perp} := \{z \in X:\omega(x,z) = 0\}
\end{align*}
the symplectic complement of $\mathrm{span}\{x\}$, which is clearly a hyperplane containing $x$. The next technical lemma states that each hyperplane is somehow related to one of its \emph{directions} (which we define as one-dimensional vector subspaces) by symplectic complementation.

\begin{lemma} Let $H \subseteq (X,\omega)$ be a hyperplane. Then there exists precisely one direction $\mathrm{span}\{x\} \in H$ such that $H = \{x\}^{\perp}$.  
\end{lemma}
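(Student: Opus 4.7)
The plan is to use the identification $\mathcal{I}: X^*\to X$ given by the symplectic form $\omega$, already introduced in Section \ref{ident}, and exploit the fact that $\omega$ is alternating.

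First I would reformulate the problem in dual language. Since $H \subseteq X$ is a hyperplane, there exists a nonzero linear functional $f \in X^*$, unique up to a nonzero scalar, such that $H = \ker f$. By the identification via $\omega$, there is a unique nonzero vector $x \in X$ with $f(\cdot) = \omega(x,\cdot)$, so
\begin{align*}
H = \{z \in X : \omega(x,z) = 0\} = \{x\}^{\perp}.
\end{align*}
This gives existence of a vector $x$ with $H = \{x\}^{\perp}$.

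Next I would verify that the direction $\mathrm{span}\{x\}$ actually lies inside $H$. This is where the symplectic (alternating) nature of $\omega$ enters: by definition $\omega(x,x) = 0$, and hence $\omega(x,\lambda x) = 0$ for every $\lambda \in \mathbb{R}$, so $\mathrm{span}\{x\} \subseteq \{x\}^{\perp} = H$.

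For uniqueness, suppose $y \in X$ is another nonzero vector with $H = \{y\}^{\perp}$. Then the two linear functionals $\omega(x,\cdot)$ and $\omega(y,\cdot)$ have the same kernel $H$, so they must be proportional: there is a scalar $\mu \neq 0$ with $\omega(y,\cdot) = \mu\,\omega(x,\cdot) = \omega(\mu x,\cdot)$. This rewrites as $\omega(y - \mu x,\cdot) \equiv 0$, and the nondegeneracy of $\omega$ forces $y = \mu x$. Therefore $\mathrm{span}\{y\} = \mathrm{span}\{x\}$, proving that the direction is unique.

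The only genuine subtlety is remembering that $\omega$ is alternating (so that $x$ itself lies in $H$); the rest is a direct application of the isomorphism $\mathcal{I}$ and the nondegeneracy already used throughout Section \ref{ident}.
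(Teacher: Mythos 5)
Your proof is correct, and its uniqueness half coincides with the paper's (two functionals with kernel $H$ are proportional, and nondegeneracy of $\omega$ then forces the vectors to be proportional). The existence half, however, follows a genuinely different route. The paper argues by contradiction: if no $x \in H$ had $H \subseteq \{x\}^{\perp}$, then $\omega|_H$ would be nondegenerate, hence a symplectic form on the odd-dimensional space $H$, which is impossible; this produces a suitable $x$ lying inside $H$ from the start. You instead construct $x$ directly by writing $H = \ker f$ and pulling $f$ back through the isomorphism $\mathcal{I}$ induced by $\omega$, and then you must separately check that $\mathrm{span}\{x\} \subseteq H$, which you correctly do via the alternating property $\omega(x,x)=0$. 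Your version is more elementary in that it avoids invoking the parity obstruction for symplectic forms, and it makes explicit a point the paper leaves implicit (that skew-symmetry is what places the distinguished direction inside the hyperplane); the paper's version has the small advantage of locating $x$ intrinsically within $H$ without appealing to the dual-space identification. Both arguments are complete and rest on the same two pillars, nondegeneracy and skew-symmetry of $\omega$.
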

\begin{proof} If $H$ is not the symplectic complement of any of its elements, then for any $x \in H$ there exists $z \in H$ such that $\omega(x,z) \neq 0$. It follows that the restriction $\omega|_H$ is non-degenerate, from where $\omega|_H$ is a symplectic form. This is a contradiction, since $H$ is odd-dimensional. 

Assume now that $H = \{x\}^{\perp} = \{y\}^{\perp}$. Since the non-degenerate functionals $f(\cdot)= \omega(x,\cdot)$ and $g(\cdot) = \omega(y,\cdot)$ have the same kernel, we see that $f = \alpha g$ for some $\alpha \neq 0$. It follows immediately that $x = \alpha y$.

\end{proof}

\begin{remark} If $X = \mathbb{R}^{2n}$ is equipped with the standard symplectic form $\omega_0:= \sum dx_j\wedge dy_j$ and with the standard inner product $\langle\cdot,\cdot\rangle$, then for any hyperplane $H$ we have that $H = \{J_0x\}^{\perp}$, where $x$ is the unique orthogonal direction to $H$ and 
\begin{align*} J_0 = \left(\begin{matrix}0_n & -\mathrm{id}_n \\ \mathrm{id}_n & 0_n \end{matrix}\right)\!:\mathbb{R}^{2n}\rightarrow\mathbb{R}^{2n}
\end{align*}
is the standard complex structure on $\mathbb{R}^{2n}$. However, notice that the identification of a given hyperplane $H$ with a direction $x \in H$ such that $H = \{x\}^{\perp}$ depends only on the fixed symplectic form on $X$. For a given hypersurface, this identification gives birth to the \emph{characteristic foliation} (which can also be obtained from Hamiltonian flows, see \cite[Chapter 1]{mcduff}). We emphasize that this is a method for obtaining the characteristic foliation which is not only independent on a Hamiltonian function, but does also not depend on a fixed inner product in $\mathbb{R}^{2n}$. In Section \ref{planar} the periodic leaves of the characteristic foliation (the \emph{closed characteristics}) will be discussed in more detail.
\end{remark}

The following is an extension of Proposition \ref{ortmax} for higher dimensional gauge spaces. Recall that we denote the unit ball of a gauge $\gamma$ by $K$ or $B_{\gamma}$, and that $\gamma_{\omega}$ stands for the dual gauge given by a fixed symplectic form $\omega$.
\begin{prop}\label{ortmax2} Let $(X,\gamma)$ be a gauge space with a fixed symplectic form $\omega$. Given an arbitrary non-zero vector $x \in X$, we have that
\begin{align*} \gamma_{\omega}(x) = \omega(x,y_0)
\end{align*}
for some $y_0 \in \partial B_{\gamma}$ if and only if $y_0 \dashv \{x\}^{\perp}$ and $\omega(x,y_0) > 0$. 
\end{prop}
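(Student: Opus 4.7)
The plan is to adapt the two-dimensional argument of Proposition \ref{ortmax} by replacing the transverse line through $y_0$ parallel to $x$ with the codimension-one affine hyperplane $y_0 + \{x\}^{\perp}$, and then to convert the pointwise orthogonality condition into a supporting-hyperplane statement via the convexity of $B_{\gamma}$.

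For the forward direction I would assume $\gamma_{\omega}(x) = \omega(x,y_0)$ for some $y_0 \in \partial B_{\gamma}$. Since $x \neq 0_X$ and $o \in \mathrm{int}B_{\gamma}$, the linear functional $\omega(x,\cdot)$ attains strictly positive values on $B_{\gamma}$, so $\omega(x,y_0) > 0$. To establish $y_0 \dashv \{x\}^{\perp}$, suppose for contradiction that there exist $z \in \{x\}^{\perp}$ and $t_0 \in \mathbb{R}$ with $\gamma(y_0+t_0z) < 1$. The normalization $(y_0+t_0z)/\gamma(y_0+t_0z)$ then lies in $B_{\gamma}$, and because $\omega(x,z)=0$ we obtain
\begin{align*} \omega\!\left(\!x,\frac{y_0+t_0z}{\gamma(y_0+t_0z)}\!\right)\! = \frac{\omega(x,y_0)}{\gamma(y_0+t_0z)} > \omega(x,y_0), \end{align*}
contradicting the maximality of $\omega(x,y_0)$.

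For the converse I would assume $y_0 \dashv \{x\}^{\perp}$ and $\omega(x,y_0)>0$. The orthogonality hypothesis yields $\gamma(y_0+z) \geq 1$ for every $z \in \{x\}^{\perp}$, so the affine hyperplane $H := y_0 + \{x\}^{\perp} = \{y \in X : \omega(x,y) = \omega(x,y_0)\}$ is disjoint from $\mathrm{int}B_{\gamma}$. Since $\mathrm{int}B_{\gamma}$ is open, convex, and hence connected, it lies entirely in one of the two open half-spaces determined by $H$; together with $y_0 \in B_{\gamma}\cap H$ this makes $H$ a supporting hyperplane of $B_{\gamma}$ at $y_0$. As $o \in \mathrm{int}B_{\gamma}$ satisfies $\omega(x,o) = 0 < \omega(x,y_0)$, the half-space containing $\mathrm{int}B_{\gamma}$ must be $\{y \in X : \omega(x,y) \leq \omega(x,y_0)\}$. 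Therefore $\omega(x,y)\leq\omega(x,y_0)$ for every $y \in B_{\gamma}$, which gives $\gamma_{\omega}(x) \leq \omega(x,y_0)$; the reverse inequality is immediate from $y_0 \in B_{\gamma}$.

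The main obstacle is exactly this ``upgrade'' in the converse direction: converting the pointwise orthogonality $y_0\dashv z$ (holding individually for each $z \in \{x\}^{\perp}$) into the uniform geometric statement that the whole affine hyperplane $H$ supports $B_{\gamma}$. The connectedness of $\mathrm{int}B_{\gamma}$, together with the identification of $H$ as a level set of $\omega(x,\cdot)$, makes this transparent, but it is precisely the ingredient with no real content in the planar case, where $\{x\}^{\perp}$ is only one-dimensional and the supporting-line conclusion is immediate.
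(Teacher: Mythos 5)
Your proof is correct and follows essentially the same route as the paper: the direction deriving orthogonality from maximality is the paper's contradiction argument verbatim, and the converse rests on the same key observation that $\omega(x,\cdot)$ is constant on the affine hyperplane $y_0+\{x\}^{\perp}$ and bounded there by $\omega(x,y_0)$ on $B_{\gamma}$. The only difference is cosmetic: where the paper parametrizes $\partial^{+}_{x}B_{\gamma}$ over $\{x\}^{\perp}$ and bounds $\omega(x,\cdot)$ pointwise, you package the same fact as a supporting-hyperplane/half-space statement via the connectedness of $\mathrm{int}B_{\gamma}$, which is if anything slightly more careful.
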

\begin{proof} The hyperplane $\{x\}^{\perp}$ splits $X$ into two half-spaces, one of them being
\begin{align*} X^+_x := \{z \in X:\omega(x,z) \geq 0\},
\end{align*}
which we call the \emph{positive half-space}. We denote by $\partial^+_xB_{\gamma}$ the portion of the unit sphere contained in the interior of the positive half-space. 

Geometrically, saying that $y_0 \dashv \{x\}^{\perp}$ and $\omega(x,y_0) > 0$ is equivalent to stating that $\{x\}^{\perp}$ supports $B_{\gamma}$ at $y_0 \in \partial^+_xB_{\gamma}$. In this case we may write
\begin{align*} \partial^+_xB_{\gamma} = \left\{\!\frac{y_0+tz}{\gamma(y_0+tz)}:z \in \{x\}^{\perp} \ \mathrm{and} \ t \in \mathbb{R}\!\right\}\!. 
\end{align*}
Since the maximum of $\omega(x,\cdot)$ over $B_{\gamma}$ is clearly attained for a point of $\partial^+_xB_{\gamma}$, and
\begin{align*} \omega\!\left(\!x,\frac{y_0+tz}{\gamma(y_0+tz)}\!\right)\! = \frac{\omega(x,y_0)}{\gamma(y_0+tz)} \leq \omega(x,y_0)
\end{align*}
for any $z\in \{x\}^{\perp}$ and every $t \in \mathbb{R}$, it follows that $\gamma_{\omega}(x) = \omega(x,y_0)$. Notice that the last inequality comes from the fact that $\gamma(y_0 + tz) \geq 1$. 

If $\gamma_{\omega}(x) = \omega(x,y_0)$ for a point $y_0 \in \partial B_{\gamma}$, then it is clear that $\omega(x,y_0) > 0$. However, assume that there exists a (non-zero) vector $z \in \{x\}^{\perp}$ such that $y_0$ is not left-orthogonal to $z$. In this case, there exists a number $t_1 \in \mathbb{R}$ such that $y_0+t_1z \in \mathrm{int}(B_{\gamma})$, that is, $\gamma(y_0+t_1z) < 1$. This gives
\begin{align*} \omega\!\left(\!x,\frac{y_0+t_1z}{\gamma(y_0+t_1z)}\!\right)\! = \frac{\omega(x,y_0)}{\gamma(y_0+t_1z)} > \omega(x,y_0) = \gamma_{\omega}(x),
\end{align*}
which is a contradiction. 

\end{proof}

As in the two-dimensional case, the characterization of the points where the dual gauge is attained yields precisely the tool that we need to understand how orthogonality behaves under duality. 

\begin{teo} Let $x,y \in X$ be non-zero vectors. If $y \dashv_{\omega} \{x\}^{\perp}$ and $\omega(x,y) > 0$, then $-x \dashv \{y\}^{\perp}$.  
\end{teo}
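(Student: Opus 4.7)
The plan is to mirror the proof of the preceding two-dimensional theorem, replacing Proposition \ref{ortmax} throughout by its higher-dimensional counterpart Proposition \ref{ortmax2}. The strategy is to apply \ref{ortmax2} twice: first to the gauge $\gamma_{\omega}$, whose $\omega$-dual is $\gamma_{-K}$ by Proposition \ref{bidual}, in order to turn the hypothesis $y \dashv_{\omega} \{x\}^{\perp}$ into a scalar equation; and then to $\gamma$ itself, in order to read off the desired orthogonality $-x \dashv \{y\}^{\perp}$ from that equation.

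For the first step, I would normalize by setting $y_0 := y/\gamma_{\omega}(y) \in \partial B_{\gamma_{\omega}}$. By positive left-homogeneity of $\dashv_{\omega}$, the hypothesis gives $y_0 \dashv_{\omega} \{x\}^{\perp}$, while $\omega(x, y_0) > 0$ is inherited from $\omega(x,y) > 0$. Applying Proposition \ref{ortmax2} to the gauge space $(X, \gamma_{\omega})$ with the same symplectic form $\omega$ (whose dual gauge is $\gamma_{-K}$) yields
\[
\gamma_{-K}(x) \;=\; \omega(x, y_0) \;=\; \frac{\omega(x,y)}{\gamma_{\omega}(y)}.
\]
Using Lemma \ref{oppgauge} to rewrite $\gamma_{-K}(x) = \gamma(-x)$, this becomes the scalar identity $\gamma(-x)\,\gamma_{\omega}(y) = \omega(x,y)$.

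For the second step, set $z_0 := -x/\gamma(-x) \in \partial B_{\gamma}$. Using the skew-symmetry of $\omega$,
\[
\omega(y, z_0) \;=\; \frac{\omega(y,-x)}{\gamma(-x)} \;=\; \frac{\omega(x,y)}{\gamma(-x)} \;=\; \gamma_{\omega}(y) \;>\; 0.
\]
Thus $z_0 \in \partial B_{\gamma}$ attains $\gamma_{\omega}(y) = \omega(y, z_0)$ with $\omega(y,z_0) > 0$. The other direction of Proposition \ref{ortmax2}, now applied to $(X,\gamma)$, forces $z_0 \dashv \{y\}^{\perp}$, and positive left-homogeneity of $\dashv$ then yields $-x \dashv \{y\}^{\perp}$, as claimed.

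The main obstacle is purely bookkeeping: tracking, at each step, which gauge's dual and which orthogonality relation are in play, and remembering that the $\omega$-dual of $\gamma_{\omega}$ is $\gamma_{-K}$ rather than $\gamma$ itself — which is exactly where Lemma \ref{oppgauge} has to be inserted, to translate $\gamma_{-K}(x)$ back into $\gamma(-x)$ before Proposition \ref{ortmax2} can be invoked a second time. Once that accounting is straight, the argument collapses to two symmetric applications of Proposition \ref{ortmax2} bridged by Lemma \ref{oppgauge}, exactly in parallel with the planar case.
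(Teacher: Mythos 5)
Your proposal is correct and follows essentially the same route as the paper: normalize, apply Proposition \ref{ortmax2} to $(X,\gamma_{\omega})$ to get $(\gamma_{\omega})_{\omega}(x)=\gamma_{-K}(x)=\gamma(-x)=\omega(x,y)/\gamma_{\omega}(y)$, rewrite this as $\gamma_{\omega}(y)=\omega\bigl(y,-x/\gamma(-x)\bigr)$, and apply Proposition \ref{ortmax2} a second time together with positive left-homogeneity. The only cosmetic difference is that the paper normalizes $y$ at the outset (taking $y\in\partial K^{\omega}$) while you carry the factor $\gamma_{\omega}(y)$ explicitly.
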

\begin{proof} Without loss of generality, assume that $y \in K^{\omega}$. According to Proposition \ref{ortmax2}, we have that if $y \dashv_{\omega} z$ for any $z \in \{x\}^{\perp}$ and $\omega(x,y) > 0$, then
\begin{align*} (\gamma_{\omega})_{\omega}(x) = \omega(x,y).
\end{align*}
Recalling that $(\gamma_{\omega})_{\omega}(x) = \gamma_{-K}(x) = \gamma(-x)$, we may write the equality above as
\begin{align*} 1 = \gamma_{\omega}(y) = \omega\!\left(\!y,\frac{-x}{\gamma(-x)}\!\right)\!.
\end{align*}
Using Proposition \ref{ortmax2} again, as well as positive left-homogeneity, we get immediately that the equality above gives $-x \dashv \{y\}^{\perp}$. 

\end{proof}

\begin{coro}\label{orthhigh} Let $x,y \in X$ be non-zero vectors such that $x \dashv \{y\}^{\perp}$ and $\omega(y,x) > 0$. Then $y \dashv_{\omega}\{x\}^{\perp}$. 
\end{coro}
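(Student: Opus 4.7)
The plan is to mimic the two-dimensional argument: invoke the preceding theorem applied to the dual gauge $\gamma_{\omega}$ itself, and then translate between $\gamma_K$ and its bi-dual $\gamma_{-K}$ via Lemma \ref{oppgauge}. The ``trick'' is that the hypothesis handed to us involves $\gamma = \gamma_K$, not $\gamma_\omega$, so I need to flip the picture before the theorem becomes applicable.

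First, I would note that the preceding theorem, applied with $\gamma$ replaced by $\gamma_\omega$ (and accordingly $\dashv$ replaced by $\dashv_\omega$ and $\dashv_\omega$ replaced by the bi-dual orthogonality $\dashv_{-K}$ coming from $(\gamma_\omega)_\omega = \gamma_{-K}$ via Proposition \ref{bidual}), reads: for any nonzero $\tilde{x},\tilde{y}\in X$,
\begin{align*}
\tilde{y}\dashv_{-K}\{\tilde{x}\}^{\perp}\ \text{and}\ \omega(\tilde{x},\tilde{y})>0\ \Longrightarrow\ -\tilde{x}\dashv_\omega\{\tilde{y}\}^{\perp}.
\end{align*}

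Next, I would pick the substitution $\tilde{x}=-y$, $\tilde{y}=-x$. Because $\omega$ is bilinear, $\omega(\tilde{x},\tilde{y})=\omega(-y,-x)=\omega(y,x)>0$ by hypothesis, and $\{\tilde{x}\}^{\perp}=\{-y\}^{\perp}=\{y\}^{\perp}$. The remaining hypothesis $\tilde{y}\dashv_{-K}\{\tilde{x}\}^{\perp}$ becomes $-x\dashv_{-K}\{y\}^{\perp}$, which by Lemma \ref{oppgauge} (extended vectorwise to hyperplanes) is equivalent to the given $x\dashv\{y\}^{\perp}$. The conclusion then reads $-\tilde{x}\dashv_\omega\{\tilde{y}\}^{\perp}$, i.e.\ $y\dashv_\omega\{-x\}^{\perp}=\{x\}^{\perp}$, which is exactly the desired statement.

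The main obstacle, and the only step that requires real care, is bookkeeping the signs so that the positivity condition $\omega(\tilde{x},\tilde{y})>0$ is preserved when passing to the dual gauge; the above choice $\tilde{x}=-y$, $\tilde{y}=-x$ is engineered precisely to convert the given $\omega(y,x)>0$ into the needed $\omega(\tilde{x},\tilde{y})>0$. Once this sign-matching is done, Lemma \ref{oppgauge} and Proposition \ref{bidual} do all the remaining work, and no further geometric argument is needed.
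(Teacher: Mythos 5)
Your proposal is correct and follows essentially the same route as the paper's own proof: apply the preceding theorem to the dual gauge $\gamma_{\omega}$ (so that its ``dual'' is the bi-dual $\gamma_{-K}$), substitute $-y$ and $-x$ for the theorem's variables, and use Lemma \ref{oppgauge} together with $\omega(-y,-x)=\omega(y,x)$ to match the hypotheses. The sign bookkeeping and the vectorwise extension of Lemma \ref{oppgauge} to hyperplanes are exactly the points the paper relies on as well.
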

\begin{proof} If $x \dashv \{y\}^{\perp}$, then from the (skew-)duality $(K^{\omega})^{\omega} = -K$ and from Lemma \ref{oppgauge} we get $-x (\dashv_{\omega})_{\omega} \{-y\}^{\perp}$. Since $\omega(-y,-x) = \omega(y,x) >0$, we get from the theorem above that $-(-y)\dashv_{\omega}\{-x\}^{\perp}$. Hence $y \dashv_{\omega}\{x\}^{\perp}$. 

\end{proof}

\begin{remark} Notice that if $\mathrm{dim}(X) > 2$, then $x \dashv y$ and $\omega(y,x) > 0$ does not necessarily imply $y \dashv_{\omega} x$. 
\end{remark}

\section{Planar sections and characteristics} \label{planar}

A subspace $Y \subseteq (X,\omega)$ is called \emph{symplectic} if the restriction $\omega|_Y$ is a symplectic form on $Y$. Equivalently, $Y$ is symplectic if and only if its symplectic complement $Y^{\perp}$ is such that $Y\cap Y^{\perp} = \{0_X\}$. A two-dimensional symplectic subspace will be called a \emph{symplectic plane}. Of course, a plane $Y\subseteq X$ is symplectic if and only if there exist vectors $x,y \in Y$ such that $\omega(x,y) \neq 0$. 

\begin{lemma} If $(X,\omega)$ is a $(2n)$-dimensional symplectic vector space, then $X$ can be written as the direct sum
\begin{align*} X = \bigoplus_{j=1}^nY_j
\end{align*}
of $n$ symplectic planes $Y_j$ such that $Y_i\cap Y_j = \{0_X\}$ whenever $i\neq j$. If $n = 1$, then the decomposition is trivial. 
\end{lemma}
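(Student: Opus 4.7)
The plan is to proceed by induction on $n$. The case $n=1$ is trivial as stated (take $Y_1 = X$ itself, which is a two-dimensional symplectic subspace of itself).

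For the inductive step, assume the result holds for symplectic vector spaces of dimension $2(n-1)$, and let $(X,\omega)$ have dimension $2n$. First I would pick any non-zero $x_1 \in X$; by nondegeneracy of $\omega$ there exists $y_1 \in X$ with $\omega(x_1,y_1) \neq 0$. Note that $x_1$ and $y_1$ must be linearly independent (since $\omega$ is alternating, $\omega(x_1,x_1)=0$), so $Y_1 := \mathrm{span}\{x_1,y_1\}$ is a symplectic plane in the sense defined just before the lemma.

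The key structural step is then to establish the direct sum decomposition $X = Y_1 \oplus Y_1^{\perp}$. For this I would use two standard facts about the symplectic complement: $\dim Y_1 + \dim Y_1^{\perp} = \dim X$ (this follows because $z \mapsto \omega(\cdot,z)|_{Y_1}$ defines a surjection $X \to Y_1^*$ with kernel $Y_1^{\perp}$, and nondegeneracy of $\omega$ forces surjectivity), and $Y_1 \cap Y_1^{\perp} = \{0_X\}$, which holds precisely because $Y_1$ is symplectic. Together these give the direct sum. Next, I would verify that $\omega|_{Y_1^{\perp}}$ is again a symplectic form: it is obviously bilinear and alternating, and if $v \in Y_1^{\perp}$ satisfies $\omega(v,w) = 0$ for every $w \in Y_1^{\perp}$, then since $v \in Y_1^{\perp}$ already kills all of $Y_1$, the decomposition $X = Y_1 \oplus Y_1^{\perp}$ yields $\omega(v,\cdot) \equiv 0$ on $X$, so $v = 0_X$ by nondegeneracy of $\omega$ on $X$.

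Now $(Y_1^{\perp}, \omega|_{Y_1^{\perp}})$ is a symplectic vector space of dimension $2(n-1)$, so by the inductive hypothesis it decomposes as $Y_1^{\perp} = \bigoplus_{j=2}^n Y_j$ with each $Y_j$ a symplectic plane and $Y_i \cap Y_j = \{0_X\}$ for $i \neq j$ in the range $2 \leq i,j \leq n$. Combining with $X = Y_1 \oplus Y_1^{\perp}$ gives the desired decomposition $X = \bigoplus_{j=1}^n Y_j$; the disjointness $Y_1 \cap Y_j = \{0_X\}$ for $j \geq 2$ follows from $Y_j \subseteq Y_1^{\perp}$ and $Y_1 \cap Y_1^{\perp} = \{0_X\}$.

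The main obstacle, if any, is the dimension identity $\dim Y_1 + \dim Y_1^{\perp} = \dim X$, which the paper does not explicitly record but which is standard for any nondegenerate bilinear form; everything else reduces to unpacking the definitions of \emph{symplectic subspace} and \emph{symplectic complement} already introduced in the previous section.
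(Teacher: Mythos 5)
Your proof is correct, but it takes a different route from the paper's. The paper simply invokes the existence of a symplectic basis $(x_1,\ldots,x_n,y_1,\ldots,y_n)$ (citing McDuff--Salamon, Theorem 2.1.3) and sets $Y_j = \mathrm{span}\{x_j,y_j\}$; the whole proof is two lines once that theorem is granted. You instead prove the decomposition from scratch by induction on $n$: pick a symplectic plane $Y_1$, establish $X = Y_1 \oplus Y_1^{\perp}$ via the dimension identity $\dim Y_1 + \dim Y_1^{\perp} = \dim X$ together with $Y_1 \cap Y_1^{\perp} = \{0_X\}$, check that $\omega$ restricts nondegenerately to $Y_1^{\perp}$, and recurse. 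All of these steps are sound: the surjectivity of $z \mapsto \omega(\cdot,z)|_{Y_1}$ does follow from composing the isomorphism $X \to X^*$ given by nondegeneracy with the (always surjective) restriction map $X^* \to Y_1^*$, and the nondegeneracy of $\omega|_{Y_1^{\perp}}$ follows exactly as you say from the direct sum. In effect you have reproved the symplectic basis theorem (this induction on symplectic complements is the standard proof of it), so your argument is self-contained where the paper's is a citation; the price is length, the benefit is that the lemma no longer depends on an external result. Either is acceptable here.
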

\begin{proof} Let $(x_1,\ldots,x_n,y_1,\ldots,y_n)$ be a \emph{symplectic basis} of $X$, that is, a basis such that $\omega(x_i,x_j) = \omega(y_i,y_j) = 0$ and $\omega(x_i,y_j) = \delta_{ij}$ for any $i,j = 1,\ldots, n$ (for the existence of such a basis see \cite[Theorem 2.1.3]{mcduff}). Now simply put
\begin{align*} Y_j = \mathrm{span}\{x_j,y_j\}.
\end{align*}
It is clear that each $Y_j$ is symplectic, since $\omega(x_j,y_j) \neq 0$, and that $X = \bigoplus_{j=1}^nY_j$.

\end{proof}

The idea is that, when restricted to such a subspace $Y\subseteq X$, the ambient gauge and symplectic form yield a planar gauge geometry and a symplectic form on $Y$, respectively. Hence our next step is to understand the behavior of these planar geometries under duality. 

\begin{teo} Let $(X,\gamma,\omega)$ be an even-dimensional vector space endowed with a gauge $\gamma$ and a symplectic form $\omega$, and let $Y\subseteq X$ be a symplectic plane. Then we have
\begin{align*} (K\cap Y)^{\omega} = \mathrm{proj}_Y(K^{\omega}),
\end{align*}
where $(K\cap Y)^{\omega}$ is the image of the polar body
\begin{align*} (K\cap Y)^{\circ} := \{f:Y\rightarrow\mathbb{R}:f(y) \leq 1 \ \mathrm{for \ all} \ y \in K\cap Y\}
\end{align*}
under the identification of $Y$ and $Y^*$ (as constructed in Section \ref{ident}) given by $\omega|_Y$, and $\mathrm{proj}_Y(K^{\omega})$ is the projection with respect to the direct sum $X = Y\oplus Y^{\perp}$. 
\end{teo}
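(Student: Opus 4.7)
The plan is to route everything through the dual picture: identify a natural restriction map $r\colon X^{*}\to Y^{*}$, show that under the identifications $\mathcal{I},\mathcal{I}_Y$ it becomes the symplectic projection $\mathrm{proj}_Y$, and then prove that $r$ maps $K^{\circ}$ onto $(K\cap Y)^{\circ}$. Applying $\mathcal{I}_Y$ to that equality converts it into the claimed equality $(K\cap Y)^{\omega}=\mathrm{proj}_Y(K^{\omega})$.

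First I would set up the diagram. Let $\mathcal{I}\colon X^{*}\to X$ and $\mathcal{I}_Y\colon Y^{*}\to Y$ be the identifications induced by $\omega$ and $\omega|_Y$ respectively, let $\pi:=\mathrm{proj}_Y$ denote the projection along the direct sum $X=Y\oplus Y^{\perp}$, and let $r\colon X^{*}\to Y^{*}$, $r(f)=f|_Y$, be the restriction. I would check that
\begin{align*}
\pi\circ \mathcal{I}=\mathcal{I}_Y\circ r.
\end{align*}
Indeed, given $f\in X^{*}$ with $z=\mathcal{I}(f)$, so that $f(\cdot)=\omega(z,\cdot)$, write $z=z_Y+z_{Y^{\perp}}$ with $z_Y\in Y$ and $z_{Y^{\perp}}\in Y^{\perp}$. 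Then for every $w\in Y$ the definition of the symplectic complement gives $\omega(z,w)=\omega(z_Y,w)$, and since $\omega|_Y$ is nondegenerate this equality identifies $\mathcal{I}_Y(r(f))$ with $z_Y=\pi(z)$.

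Next I would prove the key set-theoretic identity
\begin{align*}
r(K^{\circ})=(K\cap Y)^{\circ}.
\end{align*}
The inclusion $\subseteq$ is immediate: if $f(x)\leq 1$ for every $x\in K$, then in particular $f(y)\leq 1$ for every $y\in K\cap Y$. For the reverse inclusion, given $g\in (K\cap Y)^{\circ}$, I would first observe that $\gamma_{K\cap Y}=\gamma_K|_Y$ (because $y\in\lambda(K\cap Y)\Leftrightarrow y\in\lambda K$ for $y\in Y$ and $\lambda\geq 0$), so from $g\leq 1$ on $K\cap Y$ and positive homogeneity I obtain $g(y)\leq \gamma_K(y)$ for all $y\in Y$. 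Since $\gamma_K$ is a sublinear (subadditive, positively homogeneous) functional on $X$, the Hahn--Banach theorem produces a linear extension $\tilde g\in X^{*}$ with $\tilde g\leq \gamma_K$ on $X$; this $\tilde g$ lies in $K^{\circ}$ and satisfies $r(\tilde g)=g$.

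Finally, applying $\mathcal{I}_Y$ to both sides of $r(K^{\circ})=(K\cap Y)^{\circ}$ and using the commutation from the first step yields
\begin{align*}
(K\cap Y)^{\omega}=\mathcal{I}_Y((K\cap Y)^{\circ})=\mathcal{I}_Y(r(K^{\circ}))=\pi(\mathcal{I}(K^{\circ}))=\mathrm{proj}_Y(K^{\omega}),
\end{align*}
which is the desired equality. The main obstacle is the Hahn--Banach extension step: one must recognize that $\gamma_K$ plays the role of the sublinear dominating functional and that the compatibility $\gamma_{K\cap Y}=\gamma_K|_Y$ is exactly what allows the hypothesis $g\in (K\cap Y)^{\circ}$ to be reformulated as $g\leq\gamma_K$ on $Y$. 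Once that reduction is in place, the rest is the routine bookkeeping with the symplectic identifications.
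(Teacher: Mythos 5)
Your proof is correct, but it takes a genuinely different route from the paper's. The paper argues directly on the primal side: the inclusion $\mathrm{proj}_Y(K^{\omega})\subseteq (K\cap Y)^{\omega}$ is checked exactly as you would (using $\omega(x,\cdot)\equiv 0$ on $Y$ for $x\in Y^{\perp}$), but the reverse inclusion is obtained by a boundary/scaling contradiction: a point $y_0\in\partial(K\cap Y)^{\omega}$ lying outside $\mathrm{proj}_Y(K^{\omega})$ is written as $\alpha y_1$ with $\alpha>1$ and $y_1\in\partial(\mathrm{proj}_Y(K^{\omega}))$, a witness $x_0\in K\cap Y$ with $\omega(y_0,x_0)=1$ is produced, and lifting $y_1$ to $\partial K^{\omega}$ yields $\omega(z,x_0)=\alpha>1$, contradicting $z\in K^{\omega}$. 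You instead work entirely in the dual space: you identify the classical fact that polarity interchanges sections with projections, in the form $r(K^{\circ})=(K\cap Y)^{\circ}$ for the restriction map $r(f)=f|_Y$, with the nontrivial inclusion supplied by a Hahn--Banach extension dominated by the sublinear functional $\gamma_K$ (using $\gamma_{K\cap Y}=\gamma_K|_Y$), and then verify the commutation $\pi\circ\mathcal{I}=\mathcal{I}_Y\circ r$, which is where the symplectic-complement decomposition enters. Your route is cleaner in that it avoids boundary-attainment and scaling arguments (the paper's step $\partial(\mathrm{proj}_Y(K^{\omega}))\subseteq\mathrm{proj}_Y(\partial K^{\omega})$ and the existence of $x_0$ are left somewhat implicit there), and it isolates the real content as a standard duality statement plus bookkeeping; the paper's argument is more self-contained and elementary, never invoking Hahn--Banach, at the cost of a less transparent contradiction argument. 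Both proofs are valid.
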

\begin{proof} Let $y_0 \in \mathrm{proj}_Y(K^{\omega})$. Then we may write $z = y_0 + x$, for some $z\in K^{\omega}$ and some $x \in Y^{\perp}$. For any $y \in K\cap Y$, we have that
\begin{align*} \omega(y_0,y) = \omega(z-x,y) = \omega(z,y) \leq 1,
\end{align*}
because $z \in K^{\omega}$ and $y \in K$. Thus $y_0 \in (K\cap Y)^{\omega}$, and it follows that $\mathrm{proj}_Y(K^{\omega})\subseteq (K\cap Y)^{\omega}$. 

To prove the converse inclusion, let $y_0 \in \partial(K\cap Y)^{\omega}$. Hence we have that $\omega(y_0,y) \leq 1$ for any $y \in K\cap Y$. If $y_0 \notin \mathrm{proj}_Y(K^{\omega})$, then we may write $y_0 = \alpha y_1$ for some $\alpha > 1$ and $y_1 \in \partial(\mathrm{proj}_Y(K^{\omega}))$. Let $x_0 \in K\cap Y$ be such that $\omega(y_0,x_0) = 1$ (such a point exists because $y_0$ is in the boundary of the dual of $K\cap Y$). Since $y_1 \in \partial(\mathrm{proj}_Y(K^{\omega})) \subseteq
 \mathrm{proj}_Y(\partial K^{\omega})$, we may write
\begin{align*} z = y_1 + x
\end{align*}
for some $z \in \partial K^{\omega}$ and some $x \in Y^{\perp}$. From this we get
\begin{align*} \omega(z,x_0) = \omega(y_1+x,x_0) = \omega(y_1,x_0) = \alpha\omega(y_0,x_0) = \alpha > 1,
\end{align*}
which is a contradiction because $z \in K^{\omega}$ and $x_0 \in K$. It follows that $\partial (K\cap Y)^{\omega} \subseteq \partial(\mathrm{proj}_Y(K^{\omega}))$. This, together with the inclusion proved previously, gives the desired. 

\end{proof}

The reader may carefully notice that we do not have $(K\cap Y)^{\omega} = K^{\omega}\cap Y$ for a given plane $Y$. Actually, we will see that this equality guarantees the existence of a planar closed characteristic on $\partial K$, in the case when $K$ is smooth. First, define $J:\partial K\rightarrow \partial K^{\omega}$ as the map such that, for each $x \in\partial K$, $Jx$ is the (unique) vector with the property that $\{Jx\}^{\perp}$ supports $K$ at $x$, and $\omega(Jx,x) = 1$ (observe that from Proposition \ref{ineqdual} we have $\gamma_{\omega}(Jx) = 1$). The characteristics of $\partial K$ are precisely the curves which are tangent to the direction $Jx$ at each $x \in \partial K$. Next we state a technical lemma to understand how the map $J$ behaves under duality. 

\begin{lemma} Assume that $K$ is smooth and strictly convex. Let $J^{\omega}:\partial K^{\omega}\rightarrow \partial(-K)$ be the map $J$ as defined above regarding the dual gauge. Then we have $J^{\omega}\circ J = -\mathrm{id}_{\partial K}$. 
\end{lemma}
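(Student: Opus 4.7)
The plan is to verify directly that $w := -x$ satisfies the two conditions that uniquely characterize $J^{\omega}(Jx)$, and then invoke uniqueness. Fix $x \in \partial K$ and set $y := Jx$. By definition of $J$ we have
\begin{align*}
\omega(y,x) = 1 \quad\text{and}\quad x \dashv \{y\}^{\perp},
\end{align*}
the latter being the statement that $\{y\}^{\perp}$ supports $K$ at $x$. The target $J^{\omega}(y)$ is, by definition applied to the dual gauge, the unique vector $w$ satisfying $\omega(w,y) = 1$ and $y \dashv_{\omega} \{w\}^{\perp}$, with the understanding that such a $w$ is well-defined because $K^{\omega}$ is smooth (a consequence of the strict convexity of $K$, by the smoothness--strict convexity duality proved earlier in the paper).

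I first check the normalization for $w = -x$: by skew-symmetry of $\omega$,
\begin{align*}
\omega(-x,\,y) \;=\; -\omega(x,y) \;=\; \omega(y,x) \;=\; 1.
\end{align*}
Next, I check the orthogonality condition $y \dashv_{\omega} \{-x\}^{\perp}$. Since $\{-x\}^{\perp} = \{x\}^{\perp}$, this is exactly the conclusion of Corollary \ref{orthhigh} applied with the pair $(x,y)$: the hypotheses needed there are $x \dashv \{y\}^{\perp}$, which holds by the very definition of $Jx$, and $\omega(y,x) > 0$, which holds with value $1$. Thus Corollary \ref{orthhigh} yields $y \dashv_{\omega} \{x\}^{\perp}$, as required.

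With both defining conditions of $J^{\omega}(y)$ verified by $w = -x$, uniqueness of the supporting hyperplane at $y \in \partial K^{\omega}$ (smoothness of $K^{\omega}$) together with the technical lemma identifying a hyperplane with a unique direction via $\omega$-complementation forces $J^{\omega}(Jx) = -x$. Since $x \in \partial K$ was arbitrary, this gives $J^{\omega} \circ J = -\mathrm{id}_{\partial K}$.

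The only subtlety I anticipate is bookkeeping: one must keep straight that the ``bidual'' gauge is $\gamma_{-K}$ rather than $\gamma_K$, so that $J^{\omega}$ naturally takes values in $\partial(-K)$ (consistent with the stated codomain), and that the sign obtained from $\omega(-x,y) = \omega(y,x)$ is precisely what makes the normalization work out. Once Corollary \ref{orthhigh} is recognized as exactly the right tool, nothing further is needed.
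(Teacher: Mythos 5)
Your proposal is correct and follows essentially the same route as the paper: both arguments hinge on Corollary \ref{orthhigh} applied to the pair $(x, Jx)$ to pin down the supporting hyperplane $\{x\}^{\perp}$ of $K^{\omega}$ at $Jx$ (using smoothness of $K^{\omega}$ for uniqueness), and then on the normalization $\omega(\cdot\,,Jx)=1$ to fix the scalar as $-1$. The only difference is presentational — you verify that $-x$ satisfies the two defining conditions and invoke uniqueness, while the paper first deduces $J^{\omega}(Jx)=\alpha x$ and then solves for $\alpha$ — which is not a substantive divergence.
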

\begin{proof} For a given $y \in \partial K^{\omega}$, $J^{\omega}y$ is the unique vector such that $\{J^{\omega}\}^{\perp}$ supports $K^{\omega}$ at $y$, and $\omega(J^{\omega}y,y) = 1$. If $y = Jx$, then from Corollary \ref{orthhigh} we have that $J^{\omega}y$ points in the direction of $x$. Finally, the equality $1 = \omega(J^{\omega}y,y) = \omega(\alpha x,Jx)$ gives $\alpha = -1$. Hence $J^{\omega}(Jx) = -x$. 

\end{proof}

Now we arrive at the mentioned characterization of planar closed characteristics of $\partial K$. Recall that the integral curves of the field of directions given by $Jx$, $x \in \partial K$, are the \emph{characteristics} of $\partial K$. A characteristic $c$ is \emph{closed} if it is a \emph{closed curve}, meaning that it admits a parametrization $c(t):\mathbb{S}^1\rightarrow \partial K$. To avoid any confusion, recall also that by \emph{plane} we mean a two-dimensional vector subspace. Notice that the origin can always be translated such that a given affine plane passes through the origin. 

\begin{teo} Let $(X,\omega)$ be a symplectic vector space, and let $K \subseteq X$ be a smooth convex body containing the origin in its interior. Then, for a given symplectic plane $Y\subseteq X$, we have that $(K\cap Y)^{\omega} = K^{\omega}\cap Y$ if and only if $\partial K\cap Y$ is a planar closed characteristic of $\partial K$. 
\end{teo}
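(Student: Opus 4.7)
The plan is to use the preceding theorem to reduce the claimed equivalence to a purely geometric inclusion, and then to unpack that inclusion using supporting functionals together with the map $J$. Since the preceding theorem gives $(K\cap Y)^{\omega} = \mathrm{proj}_Y(K^{\omega})$ and the containment $K^{\omega}\cap Y\subseteq\mathrm{proj}_Y(K^{\omega})$ is automatic, the equality $(K\cap Y)^{\omega} = K^{\omega}\cap Y$ is equivalent to the single inclusion $\mathrm{proj}_Y(K^{\omega})\subseteq K^{\omega}$. Everything therefore reduces to proving that this inclusion is equivalent to $\partial K\cap Y$ being a closed characteristic of $\partial K$.

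The technical core is an intermediate equivalence that I would state as a separate claim: $\mathrm{proj}_Y(K^{\omega})\subseteq K^{\omega}$ holds if and only if every $z\in\partial K^{\omega}$ admitting a supporting hyperplane whose linear direction contains $Y^{\perp}$ must itself lie in $Y$. The forward direction uses strict convexity of $K^{\omega}$ (which holds because $K$ is smooth): if $f\in X^*$ supports $K^{\omega}$ at $z$ with $f(z)=1$ and $f$ vanishes on $Y^{\perp}$, then $f(\mathrm{proj}_Y z) = f(z) = 1$, and since $\mathrm{proj}_Y z\in K^{\omega}$ by hypothesis and the maximizer is unique, one gets $\mathrm{proj}_Y z = z\in Y$. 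For the converse, given any $z\in K^{\omega}$ I verify that $y:=\mathrm{proj}_Y z\in K^{\omega}$ by checking that $f(y)\le\max_{v\in K^{\omega}}f(v)$ for every $f\in X^*$: writing $\tilde f$ for the functional that agrees with $f$ on $Y$ and vanishes on $Y^{\perp}$, one has $f(y) = \tilde f(z)\le\max_{v\in K^{\omega}}\tilde f(v)$, and the maximizer $v^*\in\partial K^{\omega}$ of $\tilde f$ has supporting direction containing $Y^{\perp}$; by hypothesis $v^*\in Y$, whence $\max_{v\in K^{\omega}}\tilde f(v) = \tilde f(v^*) = f(v^*)\le\max_{v\in K^{\omega}}f(v)$, as required.

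It remains to translate the tangent condition into the characteristic condition via $J$. Using the biduality $(K^{\omega})^{\omega} = -K$ and the preceding lemma $J^{\omega}\circ J = -\mathrm{id}$ (which I would apply under the standing assumption that $K$ is also strictly convex, so that $J^{\omega}$ is well defined), the unique supporting hyperplane to $K^{\omega}$ at $z=Jx$ has linear direction $\{J^{\omega}(Jx)\}^{\perp} = \{-x\}^{\perp} = \{x\}^{\perp}$, which contains $Y^{\perp}$ exactly when $x\in(Y^{\perp})^{\perp} = Y$. So the intermediate condition becomes $J(\partial K\cap Y)\subseteq Y$. Finally, $\partial K\cap Y$ is the boundary of the planar convex body $K\cap Y$ inside $Y$, hence a closed convex curve whose tangent line at $x$ equals $Y\cap\{Jx\}^{\perp}$; since $Jx\in\{Jx\}^{\perp}$ automatically, the condition $Jx\in Y$ for every $x\in\partial K\cap Y$ is precisely the statement that the characteristic vector field is tangent to the closed curve $\partial K\cap Y$, i.e., that this curve is a closed characteristic of $\partial K$. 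The main obstacle will be the intermediate equivalence in the second paragraph: the forward direction leans delicately on strict convexity of $K^{\omega}$, the backward direction on the support-function estimate above, and one must handle signs and normalizations cleanly when invoking $J^{\omega}\circ J = -\mathrm{id}$ together with $(K^{\omega})^{\omega} = -K$ in the last step.
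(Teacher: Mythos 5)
Your route is genuinely different from the paper's. The paper argues directly on boundary points: for the forward implication it takes the supporting direction $z\in Y$ of $K\cap Y$ at $y$ normalized by $\omega(z,y)=1$, uses the hypothesis to place $z$ in $K^{\omega}$, and forces $H=\{z\}^{\perp}$ by contradiction, so $z=Jy$; for the converse it identifies $(K\cap Y)^{\omega}$ with the cone over $J(\partial K\cap Y)$ and uses that $J$ maps $\partial K$ onto $\partial K^{\omega}$. Your reduction of the equality to the single inclusion $\mathrm{proj}_Y(K^{\omega})\subseteq K^{\omega}$ via the preceding projection theorem (which the paper proves but does not exploit here), followed by the support-functional characterization of that inclusion, is cleaner and isolates exactly where strict convexity of $K^{\omega}$ (equivalently, smoothness of $K$) enters. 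Both halves of your intermediate equivalence are sound, including the edge case $\tilde f=0$ and the appeal to the bipolar argument of Lemma 3.1 to conclude $y\in K^{\omega}$.

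The one place where your argument, as written, proves less than the statement is the translation step: invoking $J^{\omega}$ and ``the unique supporting hyperplane to $K^{\omega}$ at $z=Jx$'' requires $K^{\omega}$ to be smooth, i.e.\ $K$ to be strictly convex --- an assumption you add but the theorem does not make. This is avoidable. Given a supporting functional $f$ of $K^{\omega}$ at $z$ with $\ker f\supseteq Y^{\perp}$, normalize so that $\max_{K^{\omega}}f=1$; then $f\in\partial (K^{\omega})^{\circ}$, and since $\mathcal{I}((K^{\omega})^{\circ})=(K^{\omega})^{\omega}=-K$, we get $f=\omega(-x,\cdot)$ for some $x\in\partial K$, with $\ker f=\{x\}^{\perp}\supseteq Y^{\perp}$ precisely when $x\in(Y^{\perp})^{\perp}=Y$. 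Strict convexity of $K^{\omega}$ (which you already have from smoothness of $K$) makes the maximizer of $f$ unique, and $Jx$ is a maximizer because $f(Jx)=\omega(Jx,x)=1$; hence $z=Jx$. This recovers your condition $J(\partial K\cap Y)\subseteq Y$ without mentioning $J^{\omega}$, and it also supplies the surjectivity of $J$ onto $\partial K^{\omega}$ that your parametrization $z=Jx$ quietly uses. With that repair the proof is complete under the stated hypotheses.
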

\begin{proof} Assume first that $(K\cap Y)^{\omega} = K^{\omega}\cap Y$. Let $y$ be a point of the boudary of the planar convex body $K\cap Y$. We let $z \in Y$ be such that the line in the direction of $z$ supports $K\cap Y$ at $y$, and $\omega(z,y) = 1$. Denote by $H$ the hyperplane which supports $K$ at $y$, and notice that we have $z \in H$. We will prove that $H = \{z\}^{\perp}$. Indeed, since it is clear that $z \in (K\cap Y)^{\omega}$, we have from the hypothesis that $z \in K^{\omega}$. If $\omega(z,x) \neq 0$ for some $x \in H$, then we may assume that $\omega(z,x) > 0$, yielding
\begin{align*} \omega\!\left(z,\frac{y+tx}{\gamma(y+tx)}\right) = 1 + \frac{t\omega(z,x)}{\gamma(y+tx)} > 1
\end{align*}
provided $t > 0$. This is in contradiction to the fact that $z \in K^{\omega}$. It follows that $H = \{z\}^{\perp}$. This, with the equality $\omega(z,y) = 1$, gives $z = Jy$. It follows that the curve $\partial K\cap Y$ is tangent at each point to the characteristic direction of $K$ at that point, and hence it is a (closed) characteristic of $\partial K$.

Now we prove the converse. For simplicity, we write $K\cap Y = K_Y$. This planar convex body induces a gauge, which is simply the restriction of the ambient gauge of $X$ to $Y$. The restriction of the symplectic form to $Y$ (recall that $Y$ is a symplectic plane) yields the dual gauge body $(K\cap Y)^{\omega} = K_Y^{\omega}$. If $\partial K\cap Y$ is a closed characteristic, then $Jy \in Y$ for any $y \in \partial K_Y$. Since $Jy$ supports $K_Y$ at $y$ and $\omega(Jy,y) = 1$, we get that $Jy \in \partial K_Y^{\omega}$. Hence we may write
\begin{align*}(K\cap Y)^{\omega} = \{\alpha Jy:y \in \partial K\cap Y \ \mathrm{and} \ 0 \leq \alpha \leq 1\}.
\end{align*}

On the other hand, $J$ maps $\partial K$ onto $\partial K^{\omega}$, and under the hypothesis we get that $J$ maps $\partial K\cap Y$ onto $\partial K^{\omega}\cap Y$. Since $\partial K^{\omega}\cap Y = \partial(K^{\omega}\cap Y)$, we get that $(K\cap Y)^{\omega} = K^{\omega}\cap Y$.  

\end{proof}

\begin{remark} Notice that if $Y$ is a given plane and $\partial K\cap Y$ is a characteristic of $\partial K$, then it is easy to see that $Y$ is symplectic. If an affine planar section of $K$ determines a characteristic at the boundary, then the theorem still holds, up to a translation of $K$. 
\end{remark}

To finish this section, we show that if $K$ is a smooth convex body, then the closed characteristics of $\partial K$ can be characterized in terms of the dual gauge of $K$. Before this, let us fix some concepts. A differentiable curve $c(t):[a,b]\rightarrow(X,\gamma)$ is \emph{regular} if $c'(t) \neq 0$ for each $t \in [a,b]$, and it is said to be \emph{simple} if it has no self-intersections. The \emph{length} of $c(t)$ in the gauge geometry given by $\gamma$ is the number
\begin{align*} L_{\gamma}(c) = \int_a^b\gamma(c'(t))\, dt,
\end{align*}
which, in general, depends (only) on the orientation of the parametrization. If $c(t):\mathbb{S}^1\rightarrow\partial K$ is a closed characteristic, then $c'(t)$ is a non-zero vector in the direction of $Jc(t)$, from where we get that $\omega(c'(t),c(t)) \neq 0$ for each $t \in \mathbb{S}^1$. Hence, up to a reparametrization, we may assume that $\omega(c'(t),c(t)) > 0$ for any $t \in \mathbb{S}^1$. A differentiable, regular curve for which the latter holds is said to be \emph{positively parametrized}. Finally, we define the \emph{symplectic area} of a closed curve $c$ (positively parametrized, say) to be
\begin{align*} A(c) = \frac{1}{2}\int_{\mathbb{S}^1}\omega(c'(t),c(t))\, dt.
\end{align*}
Of course, if $c$ is contained in a symplectic plane, then $A(c)$ is the usual area of the region which it encloses, in the area element given by $\omega$.

\begin{teo} Let $c(t):\mathbb{S}^1\rightarrow\partial K$ be a differentiable, regular, simple curve which is positively parametrized. Then $c$ is a closed characteristic of $\partial K$ if and only if
\begin{align*} 2A(c) = L_{\omega}(c),
\end{align*}
where $L_{\omega}(c)$ denotes the length of $c$ in the geometry given by the dual gauge $\gamma_{\omega}$ of $\gamma = \gamma_K$. 
\end{teo}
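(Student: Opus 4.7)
The plan is to recognize that the claimed identity $2A(c) = L_\omega(c)$ is exactly the integrated form of the duality inequality $\omega(x,y) \le \gamma_\omega(x)\gamma(y)$ from Proposition \ref{ineqdual}, and that its case of equality is controlled pointwise by Proposition \ref{ortmax2}. Since $c(t) \in \partial K$ forces $\gamma(c(t)) = 1$, applying Proposition \ref{ineqdual} with $x = c'(t)$ and $y = c(t)$ gives
\begin{align*}
0 < \omega(c'(t),c(t)) \le \gamma_\omega(c'(t)) \qquad \text{for every } t \in \mathbb{S}^1,
\end{align*}
where the strict lower bound comes from the positive parametrization. Integrating over $\mathbb{S}^1$ yields at once $2A(c) \le L_\omega(c)$. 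Because the integrand $\gamma_\omega(c'(t)) - \omega(c'(t),c(t))$ is continuous and non-negative, equality of the integrals is equivalent to pointwise equality at every $t$.

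Next I would identify the pointwise equality condition. Proposition \ref{ortmax2}, applied with the non-zero vector $c'(t)$ playing the role of $x$ and $c(t) \in \partial B_\gamma$ playing the role of $y_0$ (with $\omega(c'(t),c(t)) > 0$ automatic), tells us that
\begin{align*}
\gamma_\omega(c'(t)) = \omega(c'(t),c(t)) \iff c(t) \dashv \{c'(t)\}^\perp.
\end{align*}
Geometrically, the right-hand side says that the affine hyperplane $c(t) + \{c'(t)\}^\perp$ supports $K$ at $c(t)$. Since $K$ is smooth, this supporting hyperplane is unique, and by the definition of $J$ it is also $c(t) + \{Jc(t)\}^\perp$. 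The uniqueness lemma of this section, which states that a hyperplane is the symplectic complement of precisely one direction, then forces $\mathrm{span}\{c'(t)\} = \mathrm{span}\{Jc(t)\}$; that is, $c'(t)$ lies in the characteristic direction at $c(t)$. Thus pointwise equality at every $t$ is equivalent to $c$ being a closed characteristic. The converse direction is immediate from the same chain of equivalences: if $c'(t) \parallel Jc(t)$ then $\{c'(t)\}^\perp = \{Jc(t)\}^\perp$ is the unique supporting hyperplane at $c(t)$, so Proposition \ref{ortmax2} returns the pointwise equality, whose integrated form is $2A(c) = L_\omega(c)$.

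The only real subtlety is the passage from integrated to pointwise equality, where one uses continuity of $c'$ (and hence of the two gauge-valued integrands) to upgrade ``the integral of a non-negative continuous function vanishes'' into ``the function vanishes pointwise''. Everything else is a direct application of the duality machinery already set up in the paper; in particular, no analysis of the characteristic ODE or of the map $J$ beyond its defining properties is required.
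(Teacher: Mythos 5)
Your proof is correct and follows essentially the same route as the paper: the pointwise inequality $\omega(c'(t),c(t))\le\gamma_{\omega}(c'(t))$, Proposition \ref{ortmax2} as the pointwise equality criterion, and continuity to pass between the integrated and pointwise statements. The paper phrases the converse contrapositively (not a characteristic implies strict inequality on an interval, hence $2A(c)<L_{\omega}(c)$), but that is the same argument; your added detail on why the supporting-hyperplane condition together with smoothness of $K$ forces $c'(t)$ to be a positive multiple of $Jc(t)$ only makes explicit what the paper leaves implicit.
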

\begin{proof} If $c(t):\mathbb{S}^1\rightarrow\partial K$ is a closed characteristic, then for each $t \in \mathbb{S}^1$ we have that $c'(t)$ points in the direction of $Jc(t)$. It follows from Proposition \ref{ortmax2} that $\omega(c'(t),c(t)) = \gamma_{\omega}(c'(t))$. Thus,
\begin{align*} 2A(c) = \int_{\mathbb{S}^1}\omega(c'(t),c(t))\,dt = \int_{\mathbb{S}^1}\gamma_{\omega}(c'(t))\,dt = L_{\omega}(c).
\end{align*}
For the converse, notice that we always have $\omega(c'(t),c(t)) \leq \gamma_{\omega}(c'(t))$. Again from Proposition \ref{ortmax2} (and continuity) we have that if $c$ is not a closed characteristic, then the inequality is strict in some interval $J\subseteq\mathbb{S}^1$. It follows from integration that $2A(c) < L_{\omega}(c)$. 

\end{proof}

As a scholium, we have that the inequality $2A(c) \leq L_{\omega}(c)$ holds for any differentiable, regular, simple curve $c(t):\mathbb{S}^1\rightarrow\partial K$ which can be positively parametrized (notice that this condition can be dropped if $K$ is centrally symmetric). This can be thought of as a sort of isoperimetric inequality (for curves on $\partial K$), where the optimal cases are given precisely by the closed characteristics of $\partial K$. 

In the planar case, we obviously have that $2A(K) = L_{\omega}(\partial K)$, where $K$ is the unit disk of the gauge $\gamma$, and $\partial K$ is positively parametrized (this duality has been observed already for the symmetric case; see, e.g., \cite{Mar-Swa}). This is in line with the fact that, when $(X,\gamma_K)$ is a gauge plane, the unique closed characteristic of $\partial K$ is the curve $\partial K$ itself. In higher dimensions, the closed characteristics of $\partial K$ can be interpreted as a sort of curves for which this ``duality principle" is preserved. 

\begin{remark} In the class of smooth convex bodies, the \emph{Ekeland-Hofer capacity} and the \emph{Hofer-Zehnder capacity} coincide, and both are equal to the minimum symplectic area of the closed characteristics (see \cite[Theorem 1.3]{Art-Ost2}, and also \cite{Art-Ost}). From the theorem above we have that this can be interpreted in terms of the metric: the capacity is half the minimum length among the closed characteristics (positively oriented) measured in the dual gauge. 
\end{remark}


\begin{thebibliography}{99}
\bibitem{Alo-Mar-Wu} J. Alonso, H. Martini and S. Wu, On Birkhoff orthogonality and isosceles orthogonality in normed linear spaces. \emph{Aequationes Math.} \textbf{83(1-2)}, 153--189, 2012.
\bibitem{Art-Ost2} S. Artstein-Avidan and Y. Ostrover, A Brunn-Minkowski inequality for symplectic capacities of convex domains. \emph{Int. Math. Res. Not.} \textbf{2008(1)}, 31 pages, 2008.
\bibitem{Art-Ost} S. Artstein-Avidan and Y. Ostrover, Bounds for Minkowski billiard trajectories in convex bodies. \emph{Int. Res. Math. Not.} \textbf{2014(1)}, pp. 165--193, 2014.
\bibitem{Bol-Mar-Sol} V. Boltyanski, H. Martini and P. S. Soltan, \emph{Excursions into Combinatorial Geometry}, Universitext, Springer-Verlag, Berlin, 1997.
\bibitem{Bra-Mer} R. Brandenberg and B. G. Merino, Minkowski concentricity and complete simplices. \emph{J. Math. Anal. Appl.} \textbf{454(2)}, pp. 981--994, 2017.
\bibitem{Bra-Mer-Jahn-Mar} R. Brandenberg, B. González Merino, T. Jahn and H. Martini,
Is a complete, reduced set necessarily of constant width? \emph{Adv. Geom.}, to appear. Preprint available at arXiv:1602.07531, 2019.
\bibitem{Hor-Spi-Lan} \'{A}. G. Horv\'{a}th, M. Spirova and Z. L\'{a}ngi, Semi-inner products and the concept of semi-polarity. \emph{Results Math.} \textbf{71(1)}, pp. 127--144, 2017.
\bibitem{Jahn1} T. Jahn, Extremal radii, diameter and minimum width in generalized Minkowski spaces. \emph{Rocky Mountain J. Math.} \textbf{47(3)}, pp. 825--848, 2017.
\bibitem{jahn} T. Jahn, Orthogonality in generalized Minkowski spaces. \emph{J. Convex Anal.} \textbf{26(1)}, 49--76, 2019.
\bibitem{Jahn2} T. Jahn, Successive radii and ball operators in generalized Minkowski spaces. \emph{Adv. Geom.} \textbf{17(3)}, pp. 347--354, 2017.
\bibitem{Mar-Swa} H. Martini and K. Swanepoel, Antinorms and Radon curves. \emph{Aequationes Math.} \textbf{72(1)}, pp. 110--138, 2006.
\bibitem{mcduff} D. McDuff and D. Salamon, \emph{Introduction to Symplectic Topology}. Oxford Graduate Texts in Mathematics \textbf{27}, Oxford University Press, Oxford, 2017.
\bibitem{Mer-Jahn-Rich} B. G. Merino, T. Jahn and C. Richter, Uniqueness of circumcenters in generalized Minkowski spaces. \emph{J. Approx. Theory} \textbf{237}, pp. 153--159, 2019.
\bibitem{obst} A. Obst, A perimeter-based angle measure in generalized Minkowski spaces, submitted, 2018.
\bibitem{rassias} T. Rassias, Properties of isometric mappings. \emph{J. Math. Anal. Appl.} \textbf{235(1)}, pp. 108--121, 1999.
\bibitem{schneider} R. Schneider, \emph{Convex Bodies: The Brunn-Minkowski Theory}. Encyclopedia of Ma\-the\-ma\-tics and its Applications \textbf{151}, Cambridge University Press, Cambridge, 2014.
\bibitem{thompson} A. C. Thompson, \emph{Minkowski Geometry}, Encyclopedia of Mathematics and its Applications \textbf{63}, Cambridge University Press, Cambridge, 1996.
\bibitem{wang} J. Wang, On the generalizations of the Mazur-Ulam isometric theorem. \emph{J. Math. Anal. Appl.} \textbf{263(2)}, 510--521, 2001.
\end{thebibliography}
\end{document}